\documentclass[a4paper,10 pt]{amsart}

\usepackage{amsmath}
\usepackage{amssymb,amsbsy,amsmath,amsfonts,amssymb,amscd}
\usepackage{latexsym}
\usepackage{graphics}
\usepackage{color}
\usepackage{tikz, tikz-cd}
\usepackage{ulem}
\usepackage{longtable}
\usepackage{xy}
\usepackage{array}
\usepackage{color}
\usepackage{comment}
\input xy
\xyoption{all}

\newcommand\sC{{\mathcal C}}
\newcommand\sT{{\mathcal T}}
\newcommand\sD{{\mathcal D}}
\newcommand\sE{{\mathcal E}}
\newcommand\sA{{\mathcal A}}
\newcommand\sF{{\mathcal F}}

\newcommand\sI{{\mathcal I}}

\newcommand\sB{{\mathcal B}}

\newcommand\sU{{\mathcal U}}

\newcommand\sX{{\mathcal X}}
\newcommand\sY{{\mathcal Y}}

\newcommand\Lam{\Lambda}
\newcommand\al{\alpha}

\newcommand\s{\sigma}

\newcommand\Ga{\Gamma}

\newcommand\ga{\gamma}
\newcommand\de{\delta}

\newcommand{\CC}{\ensuremath{\mathbb{C}}}

\newcommand{\ZZ}{\ensuremath{\mathbb{Z}}}
\newcommand{\QQ}{\ensuremath{\mathbb{Q}}}

\newcommand{\sS}{\ensuremath{\mathcal{S}}}

\newcommand{\NN}{\ensuremath{\mathbb{N}}}
\newcommand{\hol}{\ensuremath{\mathcal{O}}}

\newcommand{\BB}{\ensuremath{\mathbb{B}}}
\newcommand{\PP}{\ensuremath{\mathbb{P}}}

\newcommand{\ra}{\ensuremath{\rightarrow}}

\def\eea{\end{eqnarray*}}
\def\bea{\begin{eqnarray*}}

\newcommand\dual{\mathrel{\raise3pt\hbox{$\underline{\mathrm{\thinspace d
\thinspace}}$}}}
\newcommand\qe{\ifhmode\unskip\nobreak\fi\quad $\Box$}       

\def\BOX{\hfill\lower.5\baselineskip\hbox{$\Box$}}

\newtheorem{theo}[equation]{Theorem}
\newtheorem{remark}[equation]{Remark}

\newtheorem{defin}[equation]{Definition}

\newtheorem{prop}[equation]{Proposition}

\newtheorem{lemma}[equation]{Lemma}
\newtheorem{example}[equation]{Example}

\newcommand{\sR}{\ensuremath{\mathcal{R}}}

\begin{document}

\title[Orbifolds  Quotients of Bounded symmetric Domains]{ Mok tensors and Orbifold Quotients of Bounded symmetric domains without ball factors }
\author{ Fabrizio  Catanese }
\address{Lehrstuhl Mathematik VIII, Mathematisches Institut der Universit\"{a}t
Bayreuth, NW II, Universit\"{a}tsstr. 30,
95447 Bayreuth,}
\email{Fabrizio.Catanese@uni-bayreuth.de}
\author{Marco Franciosi}
 \address{Marco Franciosi\\Dipartimento di Matematica\\Universit\`a di Pisa \\Largo B. Pontecorvo 5\\I-56127  Pisa\\Italy  }
\email{marco.franciosi@unipi.it}

\thanks{AMS Classification:  32Q15, 32Q30, 32Q55, 14K99, 14D99, 20H15, 20K35.\\ 
Key words: Symmetric bounded domains, properly discontinuous group actions, Complex orbifolds, Orbifold fundamental groups, Orbifold classifying spaces.\\ }

\date{\today}

\maketitle

\begin{abstract}
 In this paper we 
characterize  the compact orbifolds, quotients $ X = \sD/ \Ga$
of a bounded symmetric domain $\sD$ with no higher dimensional ball factor by the  action of a discontinuous  group $\Ga$,
as those projective orbifolds with ample orbifold canonical divisor which admit  a Mok curvature type tensor of orbifold type 
and satisfying
certain  other conditions implying the existence of a finite smooth covering.
\end{abstract}


\section*{Introduction}

The main purpose of this article is to extend the description of compact quotients of bounded symmetric domains $
\sD$, given in \cite{cat25} for a bounded symmetric domain $\sD$ of tube type, to the more general case
where $\sD$ contains no ball factors of dimension $\geq 2$. 

The general framework is as follows: if  $M$ is a simply connected complex manifold (as $\sD$ above),  and $\Ga$ is
a properly discontinuous group of biholomorphic automorphisms of $M$,  we consider 
the quotient normal complex space $ X = M / \Ga$, and we assume that  $X$ is compact.

 In the case where the set $\Sigma$ of points $z \in M$ whose stabilizer is nontrivial has codimension $1$, we
 add some information taking into account the irreducible Weil divisors
 $D_i \subset X$,
whose union is the codimension $1$ part of the branch locus $\sB : = p (\Sigma)$ of  $ p : M \ra X$.

 We obtain in this way a  global orbifold $\sX$, consisting of the pair   $(X, D)$, where $D$ is the $\QQ$-divisor 
$D:= \sum_{i\in I} (1-\frac{1}{m_i})D_i$, where  
for   each  divisor $D_i$ the   integer $m_i$ is  equal to  the order of the stabilizer group 
at a general point of the inverse image of $D_i$.

The above global orbifold $\sX = M / \Ga$ is said to be {\bf good} if  $\Ga$ admits a finite index normal subgroup 
 $\Lambda \triangleleft \Ga$ which acts freely (equivalently,   $\Lambda$  is torsion free). Then we denote  by  $Y$ 
the compact complex manifold $ Y : = M / \Lambda$,
and  we have then $ X = Y/G$, where $G$ is the finite quotient group $G : = \Ga / \Lambda$. 

If  $M$ is  contractible, 
 $Y$ is a classifying space $K(\Lambda, 1)$ for the group $\Lambda$, and $\sX$ is an orbifold classifying
space for the orbifold fundamental group $\Ga$ of $\sX$  (see \cite{cat24}, \cite{cat25}
for these definitions).

The case where $M = \CC^n$,  was considered in \cite{cat24}, the case where $M = \sD \subset \subset \CC^n$
 is a bounded symmetric domain of tube type was treated in \cite{cat25}
(see \cite{Koba59}, \cite{Helgason2} and also \cite{MokLibro}, \cite{CaDS}). 

For $M = \sD$ a bounded symmetric domain we have a
good global orbifold, by virtue of the so-called Selberg's Lemma (\cite{selberg}, \cite{cassels}). 

 In the case where $\Ga$ acts freely, we have some explicit criteria, stated in  \cite{CaDS}, \cite{CaDS2}:
 these are the basis for our results in the case where $\Ga$ does not act freely (but with compact quotient),
 and we combine them  with the ideas of  \cite{cat24} and  \cite{cat25},  using the language of  orbifolds
 to state our main results.


In this paper, given   a compact  orbifold $\sX = (X,D)$ where  $X$ is  an $n$-dimensional normal variety  and $D= \sum_{i\in I} (1-\frac{1}{m_i})D_i$,
(with  $m_i$ a strictly positive  integer)
we will investigate under which conditions the orbifold universal covering  is  a bounded symmetric  domain $\sD$
having no  irreducible factor $\sD_j$ 
 isomorphic to  a ball $\BB^r : = \{ z \in \CC^r :  |z|<1\}$ with   $r \geq 2$,  in its irreducible decomposition  $\sD = \prod_j \sD_j$. 

 Necessarily, a global  orbifold quotient is a Deligne-Mostow orbifold $\sX$. Hence 
we will make  first  the assumption that  $\sX$ is
a Deligne-Mostow orbifold (see \cite{d-m} Section 14), that is, locally modelled as the quotient of a smooth manifold
 by a finite group (hence $D$ is $\QQ$-Cartier).
 
 Later on, in order to relate to questions of classification theory,   we shall make the weaker
 assumption that we have a klt orbifold,  i.e., we will consider 
the case where the pair  $(X,D)$ is  klt (Kawamata log terminal).

Another important observation is that 
 necessarily   $X$ must be projective, since  indeed,  by \cite{Kodaira},   the orbifold canonical divisor $ K_{\sX} := K_X + D $ must be ample (this condition may be  weakened later on).

   Our study is based on a generalization to the orbifold case of  the ideas of \cite{CaDS}, \cite{CaDS2}, 
   relying on  the tensors introduced by Mok in  \cite{MokLibro}, \cite{Mok1},  as developed in    \cite{cat24} and  \cite{cat25},  
and 
  key instruments for our analysis are 
 the orbifold fundamental group  $\pi_1^{orb}(\sX)$, the notion of orbifold covering, and   the existence of a K\"{a}hler-Einstein orbifold metric
 as shown in \cite{Eys}.

 Our first  result is the following:

 \begin{theo}\label{easier}
 
  The global compact complex orbifolds $\sX=(X,D)$ of bounded symmetric  domains $\sD$
with no  irreducible factors isomorphic to  a higher dimensional ball  
   are the projective complex orbifolds such that:

  (1)  $ K_{\sX} := K_X + D= K_X + \sum_i \frac{m_i-1}{m_i}  D_i$ is ample;

  (2) $\sX$ admits      a  Mok curvature-type holomorphic tensor (see Def. \ref{curvature} and \ref{mok})
  $$\s_X \in    \Omega_X^1(\log (\lceil D \rceil))\otimes \Omega_X^1(\log (\lceil D \rceil)) \otimes T_X \otimes   T_X,$$
which is of orbifold type (see Def. \ref{orbifold type} and Lemma \ref{descent});

(3)   the orbifold fundamental group $\Ga$ admits a torsion free normal
subgroup $\Lambda$ of  finite index such that,  setting $G : = \Ga / \Lambda$,  
 the corresponding Galois covering $ Y \ra X= Y/G$ yields an 
orbifold $\sY$ which is reduced and

(4)  either

(4-i)  $\sY$ is a  smooth
orbifold,

or

(4-i')  $Y$ has singularities which are 2-homologically connected,
that is, they have a resolution of singularities $\pi : \tilde{Y} \ra Y$ such that $\sR^j\pi_* (\ZZ_{\tilde{Y} } )=0$
for $j=1,2$.

  \end{theo}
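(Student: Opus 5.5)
The theorem asserts an equivalence, so I will prove the two directions separately. In both directions the main tool is reduction to the free case, using the Galois covering $Y \to X = Y/G$. For a manifold $Y$ the characterization is known from \cite{CaDS}, \cite{CaDS2}: a compact K\"ahler manifold with ample canonical bundle carrying a Mok curvature-type tensor is a quotient of a bounded symmetric domain without higher-dimensional ball factors.

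\emph{Necessity.} Let $\sX = \sD/\Ga$ with $\sD$ as in the statement.

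(1) Ampleness of $K_{\sX}$ follows from \cite{Kodaira}, since the Bergman metric descends as an orbifold K\"ahler--Einstein metric of negative curvature.

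(3) Selberg's Lemma gives a torsion free normal subgroup $\Lambda$ of finite index. Then $Y = \sD/\Lambda$ is smooth and the covering $Y\to X$ is unramified in the orbifold sense, so $\sY$ is reduced. This also gives (4-i).

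(2) The Mok tensor on $\sD$ is built from the curvature tensor of the Bergman metric, modified factorwise as in Def. \ref{mok}. It is invariant under $\mathrm{Aut}(\sD)$, hence under $\Ga$. It therefore descends to a $G$-invariant tensor on $Y$. By Lemma \ref{descent} this $G$-invariant tensor corresponds to a tensor $\s_X$ of orbifold type with logarithmic poles along $\lceil D\rceil$.

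\emph{Sufficiency.} Take $\Lambda$ and $G$ as in (3), and let $f : Y \to X$ be the Galois covering. Because $\sY$ is reduced, $f^*K_{\sX} = K_Y$ (orbifold Hurwitz formula), so $K_Y$ is ample by (1). Again by Lemma \ref{descent}, pulling back $\s_X$ gives a holomorphic Mok tensor $\s_Y$ on $Y$; the orbifold-type condition is exactly what removes the logarithmic poles.

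In case (4-i), $Y$ is a compact projective manifold with ample canonical bundle and a Mok tensor. The criterion of \cite{CaDS}, \cite{CaDS2} then gives $Y \cong \sD/\Lambda'$ with universal cover $\sD$ having no higher-dimensional ball factor. Since $\Lambda = \pi_1(Y)$, the group $\Ga$ acts on the universal cover $\sD$ of $Y$ by lifts of the $G$-action. These lifts are biholomorphisms of $\sD$, and the action is properly discontinuous because it is cocompact and $\Lambda$ has finite index. Hence $X = \sD/\Ga$, and the branch divisor with its multiplicities recovers $D$, because the orbifold structure of $\sX$ is the one induced by $f$.

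In case (4-i'), I would follow \cite{CaDS2}. Apply the argument to a resolution $\tilde Y$, or work on $Y$ with the K\"ahler--Einstein metric of \cite{Eys}. The condition $\sR^j\pi_*\ZZ = 0$ for $j=1,2$ makes the relevant cohomology, and hence the Chern class and integrality arguments, agree between $\tilde Y$ and $Y$. Combined with the parallelism of the Mok tensor, this shows the singularities are actually quotient-free, hence $Y$ is smooth, and we return to the previous case.

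\textbf{Main obstacle.} The hardest step is verifying that $\s_Y$ is genuinely holomorphic on $Y$, with the correct pole behaviour on the branch locus, and that it is parallel for the K\"ahler--Einstein metric, so that the Mok-type argument (holonomy reduction and de Rham splitting) applies. I would try the Bochner-type argument of \cite{CaDS} on $Y$ using the metric of \cite{Eys}. The second delicate point is the singular case (4-i').
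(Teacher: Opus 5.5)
\textbf{Where the proposal agrees with the paper.} Your necessity argument and your treatment of case (4-i) match the paper. The ingredients are the same: Selberg's Lemma, $K_Y=\pi^*K_{\sX}$, Lemma \ref{descent} to transfer the tensor between $Y$ and $\sX$, and Theorem \ref{manifold} applied to $Y$. Your explicit lifting of the $G$-action to $\sD$, giving $X=\sD/\Ga$, only spells out a step the paper leaves implicit.

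\textbf{The gap: case (4-i').} What you write there is not an argument. ``Chern class and integrality arguments'' combined with ``parallelism of the Mok tensor'' do not yield smoothness. Neither of your two proposed settings is available under the hypotheses:
\begin{itemize}
\item On a resolution $\tilde Y$ the canonical class is no longer ample, and the pulled-back tensor need not be a holomorphic Mok tensor. So Theorem \ref{manifold} cannot be applied to $\tilde Y$.
\item The K\"ahler--Einstein/Bochner route via \cite{Eys} requires klt (or at least quotient) singularities. Condition (4-i') assumes only that the singularities are 2-homologically connected.
\end{itemize}
This is also why your stated ``main obstacle'' is misplaced. No metric or parallelism argument on a singular space is needed for this theorem; that machinery belongs to Theorems \ref{dm} and \ref{theorem klt}.

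\textbf{The missing idea is purely topological.}
\begin{itemize}
\item Let $\alpha:\tilde Y\to Y$ be a projective resolution, which exists since $Y$ is projective.
\item Since $Y$ is normal, the fibres of $\alpha$ are connected, so $\alpha_*\ZZ=\ZZ$. Together with $\sR^1\alpha_*\ZZ=\sR^2\alpha_*\ZZ=0$, the Leray spectral sequence shows that $\alpha^*:H^j(Y,\QQ)\to H^j(\tilde Y,\QQ)$ is an isomorphism for $j=1,2$.
\item Suppose $\alpha$ contracted a curve $C$. Take an ample class $h$ on $\tilde Y$. By surjectivity of $\alpha^*$ on $H^2$, $h=\alpha^*\eta$ for some $\eta\in H^2(Y,\QQ)$.
\item The projection formula then gives $h\cdot C=\eta\cdot\alpha_*[C]=0$, contradicting ampleness of $h$.
\item Hence $\alpha$ is finite and birational, so it is an isomorphism by normality of $Y$. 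Thus $Y$ is smooth, and you are back in case (4-i).
\end{itemize}
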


 We can slightly change some  of the assumptions and obtain the  following:

  \begin{theo}\label{dm}
 
  The global compact complex orbifolds $\sX$ of bounded symmetric  domains $\sD$
without irreducible factors isomorphic to  a higher dimensional ball  
   are the projective complex orbifolds such that:  
   
     ($\sA$-1) $\sX$ is a Deligne-Mostow orbifold;
     
      hypotheses (1), (2), (3)  are verified.

 \end{theo}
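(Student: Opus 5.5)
Necessity is the easy direction, and I would dispose of it first. If $\sX = \sD/\Ga$ with $\Ga$ properly discontinuous and cocompact, then $\sX$ is by construction locally the quotient of a smooth manifold (a ball in $\sD$) by the finite stabilizer, so ($\sA$-1) holds. Condition (1) follows from Kodaira's theorem, as recalled in the Introduction: the Bergman metric descends to an orbifold K\"ahler--Einstein metric of negative curvature on $\sX$. Condition (3) is Selberg's Lemma. For (2) I would take Mok's curvature tensor of the Bergman metric on $\sD$, which is $\mathrm{Aut}(\sD)$-invariant and in particular $\Ga$-invariant. Lemma \ref{descent} then shows that this tensor descends to a tensor of orbifold type on $X$, with logarithmic poles along $\lceil D\rceil$.

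For sufficiency I would reduce to Theorem \ref{easier}; the aim is to show that ($\sA$-1) together with (3) implies (4-i). Hypothesis (3) gives a torsion-free $\Lambda \triangleleft \Ga$ of finite index and the Galois cover $Y \to X = Y/G$, whose orbifold $\sY$ is reduced, so $Y$ carries no branch divisor.

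The key local step runs as follows. At a point $x \in X$, the Deligne--Mostow condition gives a neighbourhood $U \cong V/H$ with $V$ smooth and $H$ finite, and $H$ is the local orbifold fundamental group. The local structure of $Y$ over $x$ corresponds to the image of $H \to \Ga \to G$. A connected component $W$ of the preimage of $U$ in $Y$ is then $V/(H \cap \Lambda)$.

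Here is where torsion-freeness enters. Since $\Lambda$ is torsion free and $H$ is finite, $H \cap \Lambda$ is trivial. Hence $W \cong V$ is smooth, and since $H \to G$ is injective, $Y \to X$ is locally exactly the orbifold chart. So $Y$ is smooth and $\sY$ is a smooth orbifold, which is (4-i). Theorem \ref{easier} then yields that $\sX$ is a global quotient of $\sD$ without higher-dimensional ball factors.

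The main obstacle is justifying that the local group $H$ injects into $\Ga = \pi_1^{orb}(\sX)$. This is equivalent to $\sX$ being locally good in the orbifold sense, i.e. the local orbifold fundamental group of $U \setminus \mathrm{Sing}$ maps injectively. I would argue this directly from (3). The restriction of the cover $Y \to X$ to $U$ is a possibly disconnected Galois cover of $V/H$, unramified in codimension $\geq 2$ away from the chart branch data. By purity of the branch locus on the smooth $V$, its pullback to $V$ is étale over $V$ minus a codimension-$2$ set, hence étale and trivial, since $V$ can be taken simply connected. This identifies each component $W$ with $V/K$ for a subgroup $K \le H$. The freeness of $\Lambda$ on the universal cover, equivalently the torsion-freeness of $\Lambda$, then forces $K$ to act freely. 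Because $K \subset H$ is a finite group with a fixed point on the linearized chart, this gives $K = \{1\}$, and $W \cong V$ is smooth.
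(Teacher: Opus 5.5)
Your necessity direction is fine, but the sufficiency argument has a genuine gap at exactly the step you flag. Write $\iota_x\colon H\to\Ga$ for the map from the local orbifold group. A component of the preimage of $U$ in $Y$ is $W=V/K$ with $K=\ker(H\to G)$.

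Torsion-freeness of $\Lambda$ only tells you that $\iota_x(K)$, a finite subgroup of $\Lambda$, is trivial, i.e.\ $K=\ker\iota_x$. It says nothing about $\ker\iota_x$ itself. Freeness of $\Lambda$ on the orbifold universal cover $\tilde X$ only means that $Y$ is locally isomorphic to $\tilde X$. Near a point over $x$, $\tilde X$ is again $V/\ker\iota_x$, which may be singular. So the sentence claiming that torsion-freeness ``forces $K$ to act freely'' assumes what you were trying to prove. Reducedness of $\sY$ only gives that $K$ contains no pseudo-reflections, so $W$ can have an isolated quotient singularity.

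In fact (A-1) and (3) alone cannot imply (4-i). Take the quadric cone $X\cong\PP(1,1,2)$ with $D=0$. It is Deligne--Mostow, and $X_{reg}$ is the total space of a line bundle over $\PP^1$, so $\pi_1(X_{reg})=1$. Then $\Lambda=\Ga=1$ satisfies (3), yet $Y=X$ is singular. Thus goodness of $\sX$ (injectivity of every $\iota_x$) is not a formal consequence of the hypotheses your argument uses. This is precisely why Theorem \ref{easier} needs hypothesis (4), and any proof of Theorem \ref{dm} must use (1) and (2) to obtain smoothness.

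The paper does this geometrically:
\begin{enumerate}
\item It allows $Y$ to have quotient singularities and puts on it the complete orbifold K\"ahler--Einstein metric, which exists since $K_Y=\pi^*K_{\sX}$ is ample.
\item It uses the Bochner principle to show that the Mok tensor $\s_Y$ is parallel.
\item It takes Campana's orbifold de Rham decomposition of the universal cover $\tilde Y$ and identifies the holonomy of each factor via Berger--Simons, matched with the splitting of the Mok characteristic cone.
\item It uses completeness and the Hartogs property to show that each factor $Y_i$ is isomorphic to an irreducible non-ball bounded symmetric domain $\sD_i$.
\end{enumerate}
Only after this is $\tilde Y$ known to be smooth, and Theorem \ref{manifold} is applied.
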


  \begin{theo}\label{dm2}
 
  The global compact complex orbifolds $\sX$ of bounded symmetric  domains $\sD$
with no irreducible factors isomorphic to  a higher dimensional ball  
   are the projective complex orbifolds such that:  
   
    ($\sA$-2) $\sX$ is a Deligne-Mostow orbifold, with residually finite orbifold fundamental group $\Ga$, 
    which  is an orbifold classifying space  with  reduced and smooth orbifold universal covering $\tilde{X}$;
   
    hypotheses (1), (2) are verified.

 \end{theo}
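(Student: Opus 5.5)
The plan is to reduce Theorem \ref{dm2} to Theorem \ref{easier}, specifically to the alternative (4-i).

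\textbf{Necessity.} Let $\sX = \sD/\Ga$ be a global compact quotient with $\sD$ having no higher dimensional ball factors. Then:
\begin{itemize}
\item Conditions (1) and (2) hold by the necessity part of Theorem \ref{easier}.
\item $\sX$ is Deligne--Mostow, since $\Ga$ acts properly discontinuously, so the local models are quotients of smooth balls by finite stabilizers.
\item $\Ga$ is a finitely generated linear group, inside the real Lie group $\mathrm{Aut}(\sD)$, which has finitely many components and a faithful linear representation. By Malcev's theorem $\Ga$ is therefore residually finite.
\item The orbifold universal covering is $\sD$ itself. It is smooth, reduced (no ramification divisor upstairs) and contractible. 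Hence $\sX$ is an orbifold classifying space.
\end{itemize}

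\textbf{Sufficiency.} Assume ($\sA$-2), (1) and (2). The aim is to produce a torsion free normal subgroup $\Lambda \triangleleft \Ga$ of finite index such that $Y = \tilde X/\Lambda$ is smooth and $\sY$ is reduced. This gives (3) and (4-i), and Theorem \ref{easier} then concludes.

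\begin{enumerate}
\item Since $\sX$ is compact and Deligne--Mostow, it is covered by finitely many local charts $U_\alpha/G_\alpha$. Each local group $G_\alpha$ maps to $\Ga$, and this map is injective because the universal covering $\tilde X$ is smooth and reduced, so the local groups act faithfully on it.
\item Up to conjugacy there are therefore finitely many finite subgroups of $\Ga$ arising as stabilizers, hence finitely many nontrivial torsion elements $\gamma_1,\dots,\gamma_k$ up to conjugacy, namely the nontrivial elements of the $G_\alpha$.
\item By residual finiteness, choose for each $j$ a finite index normal subgroup $N_j$ with $\gamma_j \notin N_j$. Set $\Lambda = \bigcap_j N_j$. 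This is normal of finite index and contains no conjugate of any $\gamma_j$.
\item Every nontrivial stabilizer element of the $\Ga$-action on $\tilde X$ is conjugate to some element of some $G_\alpha$, so $\Lambda$ acts freely on $\tilde X$.
\item Consequently $Y=\tilde X/\Lambda$ is a smooth compact manifold and $\sY$ is reduced with empty boundary divisor, so (3) and (4-i) hold for $G=\Ga/\Lambda$.
\end{enumerate}

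\textbf{Main obstacle.} The delicate step is identifying all torsion of $\Ga$ with stabilizers of the action on $\tilde X$. A priori the abstract orbifold fundamental group could contain torsion elements acting freely, so that a torsion-free $\Lambda$ is stronger than a freely acting one. I would bypass this issue: only freeness of the action of $\Lambda$ on $\tilde X$ is needed to get $Y$ smooth with $\sY$ reduced. Beyond that, I would use the classifying space hypothesis, since a group with a finite dimensional classifying space is torsion free, applied to $\Lambda$ acting freely on $\tilde X$, which gives $Y=K(\Lambda,1)$.

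I also expect that the proof of Theorem \ref{easier} uses $\tilde X$ only through the covering $Y$. So after constructing $\Lambda$ I would check that the hypotheses of Theorem \ref{easier} are met verbatim, and in particular that the orbifold structure on $Y$ pulled back from $\sX$ coincides with the one induced by the étale covering $\tilde X\to Y$.
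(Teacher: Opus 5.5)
Your argument is correct, but it takes a different route from the paper's.

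\textbf{The paper's route.} It applies residual finiteness only to the divisor loops. It chooses finite quotients $G_i$ in which $\gamma_i$ has order exactly $m_i$, and sets $\Lambda=\ker(\Gamma\to G)$, where $G$ is the image of $\Gamma$ in $G_1\times\dots\times G_h$. It notes that the associated cover $\sY$ is reduced and then invokes Theorem~\ref{dm}.

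Such a $\Lambda$ only removes codimension-one ramification. So $Y$ may keep quotient singularities over $\mathrm{Sing}(X)$ or over singular points of $\lceil D\rceil$. Indeed $\Lambda$ can contain torsion: for example $\gamma_1\gamma_2$ at a crossing of two order-$2$ divisors, when $\gamma_1$ and $\gamma_2$ have the same image in $G$. This is why the paper has to pass through the proof of Theorem~\ref{dm}, with the orbifold K\"ahler--Einstein metric, Campana's orbifold de Rham decomposition and the Bochner principle on a possibly singular $Y$.

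\textbf{Your route.} You use compactness and the Deligne--Mostow property to get finitely many local groups. You use the smooth and reduced universal covering (via the Proposition in Section~\ref{section orbifold}) to embed them in $\Gamma$. You then use residual finiteness to separate all their nontrivial elements simultaneously. Consequently:
\begin{itemize}
\item $\Lambda$ acts freely, so $Y$ is a compact manifold;
\item contractibility of $\tilde X$ makes $\Lambda$ torsion free;
\item Theorem~\ref{easier} in case (4-i) reduces everything to Theorem~\ref{manifold}.
\end{itemize}

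\textbf{Comparison.} Your route uses the hypotheses of ($\sA$-2) in full, whereas the paper barely uses the smoothness of $\tilde X$ or the classifying-space assumption. In exchange, you verify (3) literally, including torsion-freeness, and you avoid the analytic orbifold machinery altogether.

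\textbf{Step to write out.} You should show that every stabilizer lies in a conjugate of some image of $G_\alpha$. By injectivity, each connected component of the preimage of $U_\alpha$ in $\tilde X$ is a copy of $\Omega_\alpha$. Its stabilizer in $\Gamma$ is such a conjugate, and any element fixing a point of that component must preserve the component.
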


Concerning the klt case, using the  works   \cite{GKP16}, \cite{GKPT}, \cite{GGK19},
 we are able to obtain an improvement of Theorem \ref{dm}:

  \begin{theo}\label{theorem klt} 
  
  Let $\sX=(X,D)$ be a  klt orbifold.  
  
  Then the orbifold universal covering  $\pi: \tilde{\sX} \to \sX$ is  given by
 a bounded symmetric  domain  $\sD$
with no irreducible factors isomorphic to  a higher dimensional ball  
if and only if  conditions (1), (2), (3) hold. 

 \end{theo}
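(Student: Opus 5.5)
The plan is to reduce the klt statement to the smooth and compact case handled by Theorem \ref{easier} (or Theorem \ref{dm}). The reduction goes through the orbifold Galois cover $Y \to X$ provided by condition (3), together with the klt uniformization machinery of \cite{GKP16}, \cite{GKPT}, \cite{GGK19}.

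The "only if" direction is routine. If $\tilde{\sX} = \sD$, then:
\begin{itemize}
\item ampleness of $K_{\sX}$ follows from \cite{Kodaira}, applied to the invariant Bergman metric;
\item the Mok tensor of $\sD$ is $\Ga$-invariant and descends, giving a tensor of orbifold type by Lemma \ref{descent};
\item Selberg's Lemma provides the torsion free normal subgroup $\Lambda$ of finite index, so the Galois cover $Y = \sD/\Lambda$ is smooth and the orbifold $\sY$ is reduced.
\end{itemize}
Hence (1), (2), (3) hold.

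For the converse I proceed in four steps.

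First, I use (3) to pass to $Y \to X = Y/G$ with $\sY$ reduced. Reducedness of $\sY$ means the covering is quasi-étale in the orbifold sense, that is, its branching is exactly $D$. Since the pair $(X,D)$ is klt and the cover is orbifold-étale, $Y$ has klt singularities and $K_Y = f^*K_{\sX}$ is ample. The pullback of $\s_X$ is a Mok curvature-type tensor on the regular part of $Y$, holomorphic there thanks to the orbifold type condition.

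Second, I use the existence of a Kähler–Einstein metric on $Y$ (by \cite{Eys}, or the singular version of \cite{GKPT}) and argue as in \cite{CaDS}, \cite{CaDS2}. The Mok tensor, being parallel, forces the holonomy on $Y_{reg}$ to be that of a symmetric space of noncompact type without ball factors. Consequently the Kähler–Einstein metric on $Y_{reg}$ is locally symmetric, modelled on some $\sD$. The restriction to no higher dimensional ball factors is what makes this work: it is exactly what lets the Mok tensor characterize $\sD$, because ball quotients are instead detected by the Chern equality.

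Third, I must show that $Y$ is actually smooth. This is the main obstacle, and it replaces hypothesis (4) of Theorem \ref{easier}. I would follow the strategy of \cite{GKPT}, \cite{GGK19}. A locally symmetric metric on $Y_{reg}$ gives a flat $G_{\sD}$-structure, hence a developing map on the universal cover of $Y_{reg}$ and a representation of $\pi_1(Y_{reg})$. For klt spaces, \cite{GKP16} lets me pass to a further quasi-étale cover on which $\pi_1(Y_{reg}) \to \pi_1(Y)$ is an isomorphism on the relevant flat bundles, so the uniformizing flat structure extends across the singular locus. Near each klt singularity, the local fundamental group of the link then embeds into the automorphism group of $\sD$ and is finite. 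Since $\sY$ is reduced and there is no codimension-one branching, the local quotient $\CC^n/H$ appears with $H$ finite and small. Any nontrivial such $H$ would give a point of $\tilde{\sX}$ with branching not accounted for by $D$; I expect this contradiction to use the torsion freeness of $\Lambda$, so I conclude that $H$ is trivial and $Y$ is smooth.

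Finally, once $Y$ is smooth, Theorem \ref{easier} (case (4-i)) applies and yields $\tilde{Y} = \sD$. The orbifold universal cover of $\sX$ is then $\sD$, with orbifold fundamental group $\Ga$ an extension of $G$ by $\Lambda = \pi_1(Y)$.
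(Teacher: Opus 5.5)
\textbf{There is a genuine gap in your third step.}

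Your ``only if'' direction and your first two steps broadly agree with the paper: klt passes to $Y$ by \cite[Cor. 2.43]{Kol}, $K_Y=\pi^*K_{\sX}$ is ample, you use the K\"ahler--Einstein metric of \cite{Eys} on $Y_{reg}$, and the Bochner principle makes $\s_Y$ parallel. The problem is the third step, which is exactly where the klt statement goes beyond Theorem \ref{dm}.

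You assert that near a singular point ``the local quotient $\CC^n/H$ appears with $H$ finite and small''. But a klt space is not known a priori to have quotient singularities; that is what has to be proved. Your argument is really adapted to the Deligne--Mostow setting, where local uniformizing charts are given.

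Likewise, the claim that the local fundamental group embeds into $Aut(\sD)$ via the monodromy is unjustified. It would require the developing map to be injective near the singular point, which is essentially the conclusion again.

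Even if you showed that all local fundamental groups are trivial, this could not give smoothness. The $3$-dimensional ordinary double point $\{xy=zw\}$ is terminal (hence klt), and its link $S^2\times S^3$ is simply connected.

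Finally, after invoking \cite{GKP16} you are on a further quasi-\'etale cover $Y'\to Y$. So your construction can at best prove that $Y'$ is smooth, not $Y$. One then applies Theorem \ref{manifold} to $Y'$, using that $Y'\to Y\to\sX$ is an orbifold covering.

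\textbf{The missing idea.} Smoothness should come from local freeness of the tangent sheaf together with the Lipman--Zariski theorem for klt spaces. The paper proceeds as follows.
\begin{enumerate}
\item It uses \cite{discala} (or the Galois closure of \cite{GKP16}) to pass to a finite quasi-\'etale cover $Y'$ on which restricted and full holonomy coincide.
\item On such a cover the Mok splitting is given by global idempotents on $Y'_{reg}$. These extend by Hartogs, giving $\Omega^{[1]}_{Y'}=\oplus_i\sF_i$.
\item The holonomy-invariant splitting yields a uniformizing variation of Hodge structure. Hence there is a Higgs sheaf $\sE$ on $Y'_{reg}$ containing $T_{Y'_{reg}}$ as a direct summand (\cite[Prop. 6.3, 6.6]{GrPa2}).
\item By \cite[Thm. 1.14]{GKP16}, $\sE$ extends to a locally free sheaf on all of $Y'$. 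Therefore $\sT_{Y'}$, being a direct summand of it, is locally free.
\item Lipman--Zariski then gives that $Y'$ is smooth; the case $n\le 2$ is handled separately.
\end{enumerate}
Only at that point can Theorem \ref{manifold} be invoked.
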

 
 Theorem \ref{theorem klt} in the reduced case answers  question (1) posed in \cite{CaDS}, showing that, for $X$ smooth, the hypothesis
 $K_X$ ample can be replaced by $K_X$ big and nef (since we may then consider a minimal model of $X$ with klt singularities).
  
We do not give a fully self contained proof of Theorem \ref{theorem klt}:   one   argument needed in the reduced case has recently  appeared
  in  \cite{GrPa2},  hence we
   refer to   Proposition 6.3 loc. cit., 
   showing that  the existence of a Mok tensor implies  the existence of a uniformizing VHS.

  \section{Complex orbifolds, Deligne-Mostow orbifolds, orbifold fundamental groups, orbifold coverings} \label{section orbifold}

\subsection{Notation}
Throughout  this paper $X$ is an
 $n$-dimensional (irreducible) normal compact complex space. 
 \begin{itemize}
 \item
   $ Sing(X)$ is   the singular locus of $X$,   $X_{reg}$ is the smooth locus $X_{reg}: = X \setminus Sing(X)$.
\item
An orbifold  $\sX$ is a pair $(X,D)$ where  $X$ is  an $n$-dimensional normal  complex space 
   and $D= \sum_{i\in I} (1-\frac{1}{m_i})D_i$   where the     $D_i$'s  are  distinct irreducible Weil divisors,  and 
   $m_i \in \NN$, $m_i\geq 2$.  
\item
The integer $m_i$ is called the {\bf order}  of $D_i$.
\item
$ \lceil D \rceil $ or $supp(D)$ is the  reduced divisor $ \sum D_i$, 
 and  $D_{sing}$ is the singular locus of 
$ \lceil D \rceil $. 
\item
$\sX$  is  {\bf quasi-smooth}  if moreover $D_i$ is smooth outside of $Sing(X)$.
\item
$\sX$  is   {\bf reduced}  if  $supp(D)=\emptyset$.
\item
 $\sX$  is a {\bf Deligne-Mostow orbifold} if  for each point $x \in X$ there exists an open set $ U $
 containing $x$ such that $\sX$ restricted to $U$ is isomorphic to an orbifold 
  $ \Omega/G$, where $ \Omega \subset \CC^n$.
  \item
  The underlying complex space $X$ of a D-M (= Deligne-Mostow) orbifold  has only quotient singularities (these are rational singularities).
  \item
  If $\sX = M / \Ga$ is the quotient of a complex manifold $M$ by a properly discontinuous subgroup $\Ga$, then
we say that $\sX$ is a {\bf global}  D-M orbifold.
\item
The {\bf orbifold canonical divisor} is defined as 
 $$ K_{\sX} := K_X +D = K_X +\sum_i  (1- \frac{1}{m_i})  D_i.$$
 \item
 The {\bf orbifold fundamental group} $\pi_1^{orb} (\sX)$ is defined as the quotient 
$$ \pi_1^{orb} (\sX) : = \pi_1 (X_{reg} \setminus \lceil D \rceil ) / \langle \langle(\ga_1^{m_1}, \dots, \ga_r^{m_r} , \dots \rangle \rangle$$
of $\pi_1(X_{reg} \setminus \lceil D \rceil )$ by the subgroup normally generated by the elements $\ga_i^{m_i}$, where 
the  $\ga_i$'s are simple geometric loops
  going each around a smooth point of the divisor $D_i$ (and counterclockwise). 

 \end{itemize}

\subsection{Orbifold coverings} 
If $\sX$ is reduced (i.e. $supp (D)=\emptyset$) 
the orbifold fundamental group
is just  the fundamental group of $X_{reg}$, and its subgroups correspond to finite coverings which are unramified on the smooth locus, thanks to the extension by Grauert and Remmert \cite{g-r} of Riemann's existence theorem to finite holomorphic maps
of normal complex spaces.

To subgroups of the orbifold fundamental group correspond instead the so called 
 {\bf orbifold covering} of orbifolds.

\begin{defin}[Orbifold covering (see for instance  \cite{d-m})]\label{def: orbifold covering}
Let  $\sX=(X,D)$ be an orbifold,   where  
$D= \sum_{i\in I} (1-\frac{1}{m_i})D_i$. 

An orbifold covering  $f\colon \sY=(Y, D')  \to \sX=(X,D) $ is a holomorphic  map  where   $(Y,D')$  is a pair with 
$D' = \sum_{i,j} ( 1- \frac{1}{n_{i,j}})D_{i,j}'$,   satisfying the following conditions:  

\begin{itemize}  
\item 
 every $x \in X$ has a connected neighborhood $V \subset X$ such that every connected component
 $U$ of $f^{-1}(V)$ meets the fibre $f^{-1}(x)$ in only one point, and $f_{|U }\colon  U \to V$ is finite;  
\item 
$f$ induces an unramified map $$ F \colon Y \setminus f^{-1}(X_{sing}\cup  \lceil D \rceil ) \ra X_{reg}  \setminus \lceil D \rceil ;$$ 
\item $ \forall i  \in I$,
$f^{-1}(D_i) $ is set theoretically a union of divisors $D_{i,j}'$  such that 
\begin{enumerate}  

\item  $D_{i,j}' $ maps onto $D_i$,  
\item  there is an integer  $r_{i,j}$  with $m_i = r_{i,j} n_{i,j}$ such that the multiplicity of $D_{i,j}$ in
$f^* (D_i) $ equals $ r_{i,j}$,  while the  multiplicities of the other components of $f^* (D_i) $  are equal to $m_i$.

\end{enumerate} 
\end{itemize}

\end{defin}

We recall from \cite{cat24},  \cite{d-m}:

\begin{itemize}
\item
There exists a natural one-to-one correspondence between subgroups $\Ga <  \pi_1^{orb}(X, D)$ and 
orbifold coverings $f\colon \sY \to \sX$. 
\item
The orbifold universal cover 
$ \tilde{\sX} := ( \tilde{X}, \tilde{D})$ is the one corresponding to the trivial subgroup.
\item
 An  orbifold $\sX = (X, D)$ is said to be an {\bf orbifold classifying space} if its 
universal covering $\tilde{\sX} = ( \tilde{X}, \tilde{D} )$
admits  a homotopy retraction of  to a point which preserves the subdivisor $\tilde{D}$.
\item
The orbifold $\sX$ is said to be klt (Kawamata log terminal) if the pair $(X,D)$ is klt.
\item
Klt pairs behave well under orbifold coverings $  ( Y, D') \to (X,D) $:  
by \cite[Cor. 2.43]{Kol} , the pair $(X,D)$ is klt if and only the pair  $ (Y,D')$ is klt. 
\end{itemize}

\subsection{Local orbifold fundamental group}
\begin{defin}
Consider  an  orbifold $(X, D)$ and a point $p \in X $.

Choose a small good open neighbourhood of $p$ such that there exists a homotopy  retraction 
of $U$
to $p$, induced by $ U \times [0,1]\ra U$  preserving the divisors $D_i\cap U : = D_i|U$.

 Then we get an induced orbifold $\sU : = (U, D_{|U})$ and a homomorphism of orbifold fundamental groups 
$$ \iota_p: \pi_1^{orb} (U,  D_{|U})\ra  \pi_1^{orb} (X , D ) . $$ 
$ \pi_1^{orb} (U ,  D_{|U}) $ is called the {\bf local orbifold fundamental group at p.} 

It is trivial for $p \in X \setminus D$.
\end{defin}

\begin{prop}
(1)   $\sX$ is Deligne-Mostow at $p$
if and only if the local orbifold fundamental group at $p$ is finite and the orbifold local universal covering $\tilde{\sU}$ is a reduced and smooth orbifold. 

(2) The orbifold  universal covering $\tilde{\sX}$ is reduced and smooth if  
and only if for each $p$ the local orbifold  universal covering $\tilde{\sU}$ 
is reduced and smooth and  $\iota_p$ is injective.
\end{prop}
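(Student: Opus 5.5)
The plan is to reduce both statements to the correspondence between subgroups of $\pi_1^{orb}$ and orbifold coverings recalled above, and to standard facts about finite group quotients.

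\emph{Part (1), forward direction.} Suppose $\sX$ is Deligne-Mostow at $p$, so that near $p$ we have $\sX|_U \cong \Omega/G$ with $\Omega\subset\CC^n$ and $G$ finite. After shrinking, take $\Omega$ to be a small $G$-invariant ball around a fixed point, using linearization (Cartan's lemma). Then $\Omega$ is contractible, and $\Omega$ minus the fixed loci of $G$ is simply connected, since those loci have real codimension $\geq 2$.

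We want to show that $\Omega \to U$ is the orbifold universal covering of $\sU$. The key input is the classical computation of the local group. By the definition of $\pi_1^{orb}$, we have $\pi_1^{orb}(\sU)=\pi_1(U_{reg}\setminus\lceil D\rceil)/\langle\langle \ga_i^{m_i}\rangle\rangle$. Let $\Omega^0$ be the preimage of $U_{reg}\setminus \lceil D\rceil$; it is the complement of the preimage of the branch set. Then:
\begin{itemize}
\item $\Omega^0 \to U_{reg}\setminus\lceil D\rceil$ is an unramified Galois cover with group $G$;
\item $\pi_1(\Omega^0)$ is normally generated by loops around the divisorial part of the branch locus, because removing codimension $\geq2$ sets does not change $\pi_1$ and $\Omega$ is simply connected;
\item each such loop maps to $\ga_i^{m_i}$, where $m_i$ is the order of the stabilizer of a general point.
\end{itemize}
Hence $\pi_1^{orb}(\sU)\cong G$, which is finite. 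Moreover, the covering associated to the trivial subgroup is $\Omega$ with the reduced, empty divisor, so $\tilde{\sU}$ is reduced and smooth.

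\emph{Part (1), converse.} If $\pi_1^{orb}(\sU)=G$ is finite and $\tilde{\sU}=(\tilde U,0)$ is smooth and reduced, then the orbifold covering $\tilde U\to U$ is Galois with group $G$. Its quotient $\tilde U/G$ is normal and agrees with $U$ outside a codimension-2 set, with the correct branching orders along the $D_i$. It is therefore isomorphic to $U$ as an orbifold, by normality and Riemann extension (the Grauert-Remmert theorem cited earlier). Embedding $\tilde U$ locally in $\CC^n$ then gives the Deligne-Mostow chart.

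\emph{Part (2).} Let $\tilde{\sX}\to\sX$ be the universal orbifold covering. Over the small neighbourhood $U$ of $p$, each connected component of the preimage of $U$ is the orbifold covering of $\sU$ corresponding to the subgroup $\ker\iota_p$. This uses the first condition in Definition \ref{def: orbifold covering} together with the subgroup–covering correspondence applied to the restriction.
\begin{itemize}
\item If $\iota_p$ is injective and $\tilde{\sU}$ is reduced and smooth, then every local component is $\tilde{\sU}$ itself. Checking this for all $p$ shows that $\tilde{\sX}$ is reduced and smooth.
\item Conversely, suppose $\tilde{\sX}$ is smooth and reduced. Then a component $V$ over $U$ is smooth with reduced (empty) divisor. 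Its orbifold fundamental group is $\ker\iota_p$, and it is the group of the (here ordinary) fundamental group of $V$ minus codimension-2 sets.
\end{itemize}

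\emph{Main obstacle.} The hard step is showing that $\ker\iota_p$ is trivial. For this, choose $U$ so that $V$ is a small contractible neighbourhood of a point in a smooth manifold; this is possible by shrinking, because the retraction of $U$ lifts. Then $\pi_1(V)=1$, so $\ker\iota_p=1$. Thus $\iota_p$ is injective and $V=\tilde{\sU}$ is smooth and reduced. The point requiring care is that the good neighbourhoods can be chosen compatibly with the lifts, which is why the retraction in the definition is required to preserve the divisors.
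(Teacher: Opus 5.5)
Your proposal is correct. For part (1) and for the ``if'' half of (2) it follows the paper's argument. The paper simply calls necessity in (1) clear; your meridian computation giving $\pi_1^{orb}(\Omega/G)\cong G$ is the honest version of that. Your converse is the paper's $U=M/G$, followed by a chart at a point above $p$ for its isotropy group.

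There is one slip to remove. The clause that $\Omega$ minus the fixed loci of $G$ is simply connected ``since those loci have real codimension $\geq 2$'' is false whenever $G$ contains pseudo-reflections, i.e.\ exactly when $D\neq\emptyset$; already $\mu_2$ acting on $\CC$ is a counterexample. Your bullets state the correct fact: only complex codimension $\geq 2$ can be discarded, while the reflection hyperplanes contribute meridians mapping to conjugates of the $\gamma_i^{m_i}$. Note also that unramifiedness of $\Omega^0\to U_{reg}\setminus\lceil D\rceil$ uses that a nontrivial group without pseudo-reflections has singular quotient.

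The genuine difference is in the ``only if'' half of (2).

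\emph{The paper's route.} It takes the covering $\sU'\to\sU$ attached to $\Gamma'=\ker\iota_p$ and notes that the components of $\tilde{\sX}|_U$ are locally isomorphic to $\sU'$, so $\sU'$ is smooth and reduced. It then concludes that $\tilde{\sU}$, being unramified over $\sU'$, is smooth and reduced. Injectivity of $\iota_p$ is checked explicitly only when $p$ is a smooth point on a single smooth $D_i$.

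\emph{Your route.} You identify $\ker\iota_p$ with $\pi_1^{orb}(V)=\pi_1(V)$ and kill it by contractibility of $V$. This gives injectivity in general and $V\cong\tilde{\sU}$ in one stroke, so it actually supplies the step the paper leaves implicit.

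The cost is the topological input. ``The retraction of $U$ lifts'' does not follow from the definition of a good neighbourhood. A homotopy that merely preserves the $D_i$ need not keep $U_{reg}\setminus\lceil D\rceil$ inside itself for $t<1$, and that is what unique lifting needs.

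The cleanest fix is to choose $U$ adapted to $V$ rather than conversely:
\begin{enumerate}
\item Pick $q\in\tilde X$ over $p$ and its finite stabilizer $\Gamma_q$, finite by proper discontinuity of the deck action.
\item Take a $\Gamma_q$-invariant ball $B$ around $q$ in linearizing coordinates (Cartan), small enough that $\gamma B\cap B=\emptyset$ for $\gamma\notin\Gamma_q$.
\item Set $U:=B/\Gamma_q$.
\end{enumerate}
This $U$ is a good neighbourhood, since the radial retraction descends and preserves the reflection hyperplanes. The components over it are the translates of $V=B$, which is contractible. Your part (1) computation then identifies $\iota_p$ with the inclusion $\Gamma_q\hookrightarrow\Gamma$.
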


\begin{proof}
(1): the conditions are clearly necessary.

Conversely, let $G$ be the local orbifold fundamental group, and assume that $\tilde{\sU}$ is a smooth manifold $M$.

Then $U = M / G$, and, shrinking $U$, there is $U' \subset \CC^n$ such that  $U = U'   / G'$, where $G'$ is the isotropy group of a point $p'$ lying above $p$.

(2) The if assertion  follows   since then $\tilde{\sX} |U$ induces a trivial covering of $\tilde{\sU}$.

Concerning the only if assertions of  (2),
if    $\tilde{\sX}$ is reduced and smooth, then clearly also $\tilde{\sU}$ 
is reduced and smooth.

 If  $\iota_p$ is not injective then first of all necessarily $ p \in D_i$ for some $i$.

   $\tilde{\sX} |U $ is an orbifold  covering of $\sU$, which  is    reduced and smooth.
   
   If $p$ is a smooth point of $X$ and $p$ belongs to only one
codimension $1$ component $D_i$ of $D$, smooth at $p$,  then the local fundamental group
is generated by $\ga_i$, hence  $\iota_p$ is  injective.

  More generally,  $\tilde{\sX} |U $, being an   orbifold  covering of $\sU$, is such that its connected components admit as
   orbifold coverings  the universal 
   covering   $\tilde{\sU} \to \sU $.

To the kernel $\Ga'$ of $\iota_p$ corresponds  a connected  orbifold
 covering $\sU'\to \sU$ and we have a factorization
$$\tilde{\sU}  \to \sU' \to \sU$$

The connected components of   $\tilde{\sX} |U $ are locally isomorphic to $ \sU' $,
hence $ \sU' $ is smooth and reduced; it follows then  that   $\tilde{\sU}$, which is an unramified covering 
of $ \sU' $, is also smooth and reduced.

\end{proof}

\section{ Algebraic curvature-type tensors, their First Mok characteristic varieties, and Mok tensors}

We recall in this section the notation and the results of \cite{MokLibro},  \cite{Mok1},  and \cite{CaDS2},
which studied and applied the curvature tensor $\s$ introduced by Kobayashi and Ochiai in \cite{KobOchi}
for locally symmetric manifolds.

\begin{defin}\label{curvature}
Given a direct sum
$$T = T_1 \oplus  ... \oplus T_k$$
of irreducible representations $T_i$ of a group $H_i$,

1)
An algebraic {\bf curvature-type tensor} is a nonzero  endomorphism
$$ \s   \in End (T \otimes T^{\vee}).$$
2) Its first Mok characteristic cone  $\sC\sS \subset  T$ is defined as the projection on the first factor
of the intersection of $ker (\s)$ with the set of rank 1 tensors, to which we add  the origin:
$$ \sC\sS : = \{ t \in T | \exists t^{\vee} \in T^{\vee} \setminus \{0\}, (t \otimes  t^{\vee} ) \in ker (\s)  \}.$$
3) Its {\bf first Mok characteristic variety} is the associated subset $\sS : = \PP ( \sC\sS) \subset \PP (T)$.

4) More generally, for each integer $h$, to 
 the algebraic cone 
 $$ \sC\sS^h : = \{ t \in T | \exists A  \in ker (\s), {\rm Rank} (A) \leq h  , \exists t'  \in T: t = A \llcorner t'  \},$$ 
 is associated  the {\bf h-th Mok characteristic variety} $\sS^h : = \PP ( \sC\sS^h) \subset \PP (T)$.

The  full characteristic sequence is then the sequence
$$ \sS = \sS^1   \subset \sS^2 \subset \dots \subset \sS^{k-1} \subset \sS^k =  \PP (T).$$

\end{defin}

\begin{remark}\label{Mok}
\begin{enumerate}
\item
In the case where $\s$ is the curvature tensor of an irreducible symmetric bounded domain $\sD$, 
 $T = T_1 \oplus  ... \oplus T_k$ is the tangent space at one point and $H_1 \oplus  ... \oplus H_k$
is the restricted holonomy group, and 
Mok (\cite{Mok1})
proved that the difference sets  $ \sS^h \setminus    \sS^{h-1}$ are exactly all the  orbits of the  parabolic subgroup $P$
 associated to the compact dual
$\sD^{\vee} = G/P$.  In particular, the algebraic cone  $ \sC \sS^h$ is irreducible and $H_i$-invariant.
\item
More generally, if $ \sS \subset T_i$ is an $H_i$-invariant algebraic cone, and  $H_i$ is the holonomy group of an  irreducible symmetric bounded domain,
then necessarily $ \sS$ is irreducible and indeed equal to one of the $ \sS^h$.
\item
In the case instead where $H_i$ acts as the full unitary group on $T_i$, then any $H_i$-invariant algebraic cone in $T_i$ is trivial, that is, either equal to
$T_i$ or just equal to  $\{ 0 \}$, where $0 \in T_i$ is the origin.

\end{enumerate}

\end{remark}
\begin{defin}\label{mok}
  A holomorphic curvature type tensor 
  $ \s   \in End (T_M \otimes T_M^{\vee})$
  on a connected complex manifold $M$ 
  is called a {\bf Mok curvature-type holomorphic tensor}, or, more briefly, a {\bf Mok tensor} if 
there exists  a   point  $p \in M$, and  a splitting of the tangent space $T = T_{M,p} $
$$T = T'_1 \oplus  ... \oplus T'_m$$
such that

(A.1)  the first Mok characteristic cone  $\sC\sS$  of $\s_p$  is $\neq T$ 

(A.2)  $\sC\sS$ splits into m
irreducible components $\sC\sS' (j )$  with

B) $ \sC\sS' (j )  =  T'_1 \times  ... \times  \sC\sS'_j \times    ...   \times T'_m$

C)  $\sC\sS'_j  \subset T'_j $  is the cone over a smooth  non-degenerate projective variety $\sS'_j$ (non-degenerate means that  the cone $\sC\sS'_j $ 
spans the vector space $T'_j$) 
unless  $ \sC\sS'_j = 0$ and dim $(T'_j) = 1.$

(II) If $X$ is singular, a meromorphic curvature type tensor $\s$ is said to be a Mok tensor
if there is a smooth point $p$ of $X$ where $\s$ is holomorphic and where properties (A.1), (A.2), B), C) hold.

\end{defin}

In the case where there are no ball factors,  situation (3) does not occur, 
and Theorem \ref{easier} is a generalization of the following  \cite[Thm. 1.2]{CaDS2}.

\smallskip
\begin{theo}\label{manifold}
Let $X$ be a compact complex manifold of dimension $n$ with $K_X$  ample.

Then the universal covering $\tilde{X}$ is a bounded symmetric domain with no    higher  ball factors
if and only if there is a   Mok curvature type holomorphic tensor  $\s \in H^0 ( End (T_X \otimes T_X^{\vee}))$.

Moreover, we can recover the universal covering  of $\tilde{X}$ from the sequence of pairs
$ ( dim (\sC\sS'_j ) , dim (T'_j))$.

\end{theo}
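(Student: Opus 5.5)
We prove the two implications separately; the converse is where all the work lies.

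\emph{Necessity.} Suppose $\tilde X=\sD=\prod_j \sD_j$ is a bounded symmetric domain with no ball factors $\BB^r$, $r\geq 2$, and $X=\sD/\Ga$ with $\Ga$ torsion free (here $\pi_1(X)=\Ga$ acts freely). Then $K_X$ is ample by \cite{Kodaira}, since the Bergman metric descends to a metric of negative Ricci curvature.

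The curvature tensor of the Bergman metric on $\sD$ is invariant under $Aut(\sD)^0$ and under permutations of isomorphic factors. Hence it descends to a holomorphic section $\s\in H^0(End(T_X\otimes T_X^{\vee}))$. The same holds for the finite index subgroup of $\Ga$ preserving the factors, and in general for the curvature tensor itself, which is fully $Aut(\sD)$-invariant.

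We then check the conditions of Definition \ref{mok}:
\begin{itemize}
\item By Remark \ref{Mok}(1), the first Mok characteristic cone of each irreducible factor of rank $\geq 2$ is the cone over the minimal orbit of the parabolic subgroup, namely the variety of minimal rational tangents.
\item This variety is smooth, non-degenerate and $\neq T_j$.
\item For a disc factor we have $\dim T_j=1$ and $\sC\sS_j=0$.
\item A rank one tensor $t\otimes t^{\vee}$ lies in $\ker\s$ exactly when, for some $j$, the $T_j$-component $t_j$ lies in $\sC\sS_j$. This is because the curvature is block diagonal and the cross terms between different factors vanish.
\end{itemize}
This gives (A.1), (A.2), B) and C). The absence of higher ball factors matters here: for $\BB^r$ with $r\geq 2$ the characteristic cone is trivial and C) would fail.

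\emph{Sufficiency.} Given a Mok tensor $\s$ and $K_X$ ample, we proceed in four steps.

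First, Aubin--Yau gives a Kähler--Einstein metric $g$ with negative Ricci curvature.

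Second, we use the Bochner-type argument for parallel tensors: on a compact Kähler--Einstein manifold, every holomorphic section of a tensor bundle $T_X^{\otimes a}\otimes (T_X^{\vee})^{\otimes a}$ of weight zero is parallel. This is Kobayashi's vanishing / holonomy principle, applying the maximum principle to $|\s|^2$, whose Laplacian is nonnegative because the Einstein terms cancel in weight zero. Hence $\s$ is parallel, and it is invariant under the restricted holonomy group $H$ at $p$.

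Third, by de Rham and Berger we decompose $\tilde X=\prod_i M_i$ with $T=\oplus T_i$ the holonomy decomposition. Each $M_i$ is either a Hermitian symmetric space of noncompact type (negative Einstein constant excludes the flat and compact types), or has holonomy acting as the full $U(T_i)$ (the generic Kähler case). Hyperkähler and other special holonomies are excluded by $K_X$ ample and the Einstein sign. The characteristic cone $\sC\sS$ is $H$-invariant, so its components are $H$-invariant.

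The main obstacle is to match the Mok splitting $T'_j$ with the holonomy splitting $T_i$, and to exclude factors with full unitary holonomy of dimension $\geq 2$. We argue as follows.
\begin{itemize}
\item Since $\sC\sS$ is $H$-invariant, its finitely many irreducible components are permuted by $H$. $H$ is connected, so each component $\sC\sS'(j)$ is $H$-invariant, and so is the smallest subspace cut out by it.
\item Using B), we recover $T'_j$ intrinsically as the quotient of $T$ by the largest subspace $V$ with $\sC\sS'(j)+V=\sC\sS'(j)$. This makes each $T'_j$ (via the complement) $H$-invariant, so the $T'_j$ are unions of holonomy factors $T_i$.
\item For a factor $T_i$ with full unitary holonomy, the projection of $\sC\sS'_j$ to $T_i$ is a $U(T_i)$-invariant cone, hence $0$ or all of $T_i$ by Remark \ref{Mok}(3). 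Smoothness and non-degeneracy in C), together with irreducibility in (A.2), force $T'_j=T_i$ and $\sC\sS'_j=0$. Condition C) then forces $\dim T_i=1$, a disc.
\item For a symmetric factor, Remark \ref{Mok}(2) identifies $\sC\sS'_j$ with some $\sS^h$. Smoothness forces $h=1$, and irreducibility of $\sS'_j$ forces $T'_j$ to be a single irreducible factor; if instead $\sC\sS'_j$ were trivial, $T'_j$ would be a ball, which C) excludes unless its dimension is $1$.
\end{itemize}
Hence $\tilde X$ is a product of irreducible bounded symmetric domains without higher balls.

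Finally, each factor $\sD_j$ is determined by the pair $(\dim\sC\sS'_j,\dim T'_j)$, by inspecting the finite list of varieties of minimal rational tangents of irreducible Hermitian symmetric spaces. This recovers $\tilde X$ from the sequence of pairs.
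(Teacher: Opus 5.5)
Your argument for the equivalence follows the route the paper relies on. The paper quotes \cite{CaDS2} for this statement and reruns the same scheme in the proof of Theorem \ref{dm}: Aubin--Yau metric, Bochner principle for the weight-zero tensor, de Rham--Berger decomposition, and identification of the Mok splitting with the holonomy splitting. Factors with full unitary holonomy of dimension $\geq 2$, the higher balls among them, are excluded by condition C). The step that genuinely fails is your last one. The list of irreducible bounded symmetric domains is not finite, and the pair of dimensions does not determine the factor. $D^{I}_{p,q}$ gives $(p+q-1,pq)$ and $D^{II}_{n}$ gives $(2n-3,\binom{n}{2})$. So $D^{I}_{3,15}$ (compact dual $G(3,18)$, characteristic variety $\PP^2\times\PP^{14}$) and $D^{II}_{10}$ (compact dual $SO(20)/U(10)$, characteristic variety $G(2,10)$) both give $(17,45)$, although their ranks are $3$ and $5$. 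A Pell equation produces infinitely many further coincidences, e.g.\ $D^{I}_{15,85}$ and $D^{II}_{51}$. Hence no inspection can recover $\tilde X$ from dimensions alone. What your argument does legitimately give is that each $\sS'_j\subset\PP(T'_j)$ is the variety of minimal rational tangents of the corresponding de Rham factor. It is this embedded variety, not its dimension, that identifies the factor.

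Two matching steps also need repair, although their conclusions are correct. In the unitary case, the dichotomy for the projection of $\sC\sS'_j$ to $T_i$ gives nothing by itself. You need that $\sC\sS'_j\subset T_i\oplus W$ is stable under $GL(T_i)$, the Zariski closure of $U(T_i)$, acting on the $T_i$-summand alone, so that it has the form $(\{0\}\times A)\cup(T_i\times B)$. Irreducibility and non-degeneracy then leave the cylinder $T_i\times B$, whose projectivization is a cone with vertex $\PP(T_i)$. This cone is singular unless $B$ is linear, and a linear non-degenerate $B$ equals $W$, contradicting (A.1); hence $W=0$ and $T'_j=T_i$. In the symmetric case, irreducibility does not force $T'_j$ to be a single factor: $\sC\sS_a\times\sC\sS_b\subset T_a\oplus T_b$ is irreducible, holonomy invariant and non-degenerate. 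What excludes it is smoothness. The centres of $H_a$ and $H_b$ give, after Zariski closure, a $\CC^*\times\CC^*$-action. At a point $(x,0)$ of the cone, smoothness forces the fibre over $x$ to be an $H_b$-invariant linear subspace, i.e.\ $0$ or $T_b$, which yields a degenerate cone or a cylinder. Only after this step may you apply Remark \ref{Mok}(2), not before as in your write-up.
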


The above result can be  simplified if one restricts to locally symmetric varieties $X$ whose universal covering
$\tilde{X}$ is a bounded symmetric domain without factors of rank one (see loc. cit.), but we shall not write up here
 the generalization of this result to the orbifold case. 

\smallskip

\section{ Symmetric domains and descent of  curvature type  tensors}

 \subsection{Curvature type tensors and their descent}  
 
 \hfill\break
 
 Every orbifold covering around a smooth point of  X,  lying on the smooth locus of an orbifold divisor 
 $D_i$ having order $m_i$,
   can be seen locally as a  ramified covering  
$$ \begin{array} {rll} Y:=\CC^n & \to  & \CC^n =X  \\
(y_1, \dots, y_{n-1}, y_n) & \mapsto & (x_1=y_1, \cdots, y_{n-1}=x_{n-1}, x_n=y_n^{q}) \end{array} $$
where  $D_i= (x_n=0)$ and $ q | m_i$.
 
If we let $G$ be the local Galois group of the covering, we want to see a relation between the condition that $\s$ 
is a  $G$-invariant  algebraic curvature-type holomorphic tensor
 $\s  \in End \big[  T_Y \otimes  \Omega_Y^1] $
 and its property of descending to an   algebraic curvature-type meromorphic tensor
 on $X$. 
 
  Let $\sX=(X,D)$ and let $ \lceil D \rceil $ be  the  reduced divisor $ \sum D_i$. 
 
 Recall that the sheaf $ \Omega_X^1(\log (\lceil D \rceil))$ is the sheaf generated by $ \Omega_X^1$ and by the differentials
 $ d log (F_i)$, where the $D_i = div(F_i)$'s  are the components of $D$; its dual is denoted by  $T_X(-\log (\lceil D \rceil)) $
 and, as shown  in \cite{cimemoduli}, it can be seen as  the sheaf of holomorphic tangent vectors  carrying each   ideal $\sI_{D_i}$
 to itself.

Ideally, one would like to show that  
$\s$ descends to some holomorphic tensor $\hat{\s}_X \in End\big[ T_X(-\log (\lceil D \rceil))  \otimes  \Omega_X^1(\log (\lceil D \rceil))\big]  $ on the smooth locus
$X_{reg}$ of $X$. The following Lemma shows however that this is only a meromorphic such tensor  in the case $q=2$. At any rate, one 
then  defines 
$$ \s_X : = i_* ( \hat{\s}_X), \ \
{\rm for }  \ i : X_{reg} \ra X  \ {\rm being \ the \ inclusion}.$$

\begin{lemma}\label{descent}
Consider the action of the cyclic group $$G= \mu_q : = \{ \zeta| \zeta^q=1\}$$ of qth roots of unity at the origin in $Y : = \CC^n$, via the linear action,
for $\zeta \in \mu_q$:
$$ (y_1,\cdots,y_{n-1},y_n)  \mapsto(y_1,\cdots,y_{n-1}, \zeta y_n) $$
whose  quotient map is $\pi : Y \ra X : = \CC^n$,
$$\pi : (y_1, \dots, y_{n-1}, y_n) \mapsto (x_1=y_1, \cdots, x_{n-1}=y_{n-1}, x_n=y_n^q). $$ 

Let $D\subset X$ be the branch divisor, defined by $x_n=0$, and let $D'\subset Y$ be the ramification divisor, defined by $y_n=0$.

Then a  $G$-invariant  algebraic curvature-type holomorphic tensor $ \s \in  End (T_Y \otimes  \Omega_Y^1 ))$
descends, for $q \geq 3$,  to a  holomorphic   tensor  
$$\s_X \in End \big[ T_X(-\log (\lceil D \rceil))  \otimes  \Omega_X^1(\log (\lceil D \rceil))\big] \subset  \Omega_X^1(\log (\lceil D \rceil))\otimes \Omega_X^1(\log (\lceil D \rceil)) \otimes T_X \otimes   T_X , $$
and for $q=2$ to a holomorphic tensor 
$$\s_X \in    \Omega_X^1(\log (\lceil D \rceil))\otimes \Omega_X^1(\log (\lceil D \rceil)) \otimes T_X \otimes   T_X.$$

Conversely, setting $\xi_i =  \partial / \partial  x_i, \ i=1, \cdots, n,$  $f_i = log(x_i)$, 
 a  holomorphic   tensor  
 $$\s_X \in    \Omega_X^1(\log (\lceil D \rceil))\otimes \Omega_X^1(\log (\lceil D \rceil)) \otimes T_X \otimes   T_X,$$
 written as 
$$ \s_X =  \sum_{i,j,h,k=1}^n  B'''_{ijhk}(x) df_i \otimes df_j \otimes \xi_h \otimes \xi_k$$
lifts to  a  holomorphic   tensor  $\s$ on $Y$ if and only if  $B'''_{ijhk}(x)$ is divisible by $x_n^{t + p(s,t)} $ 
where, denoting by  $[m] \in \{0,\dots, q-1\}$  the residue class of $m$, 
$$  s: =  |\{ i,j =n\} | , \ t : =   |\{h,k=n\}|, \ \ p(s,t) : = \frac{s-t + [t-s]} {q}  .$$

\end{lemma}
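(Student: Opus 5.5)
The plan is a direct computation in the local coordinates of the statement, tracking the power of $y_n$ produced by each tensor monomial. We identify $End(T_Y\otimes\Omega^1_Y)$ with $\Omega^1_Y\otimes\Omega^1_Y\otimes T_Y\otimes T_Y$, as the statement does. We write
$$\s=\sum_{i,j,h,k} A_{ijhk}(y)\, dy_i\otimes dy_j\otimes \partial_{y_h}\otimes\partial_{y_k}.$$
For $i<n$ we have $dy_i=dx_i$ and $\partial_{y_i}=\xi_i$. In the last coordinate, $x_n=y_n^q$ gives
$$df_n = dx_n/x_n = q\,dy_n/y_n, \qquad \partial_{y_n}=q y_n^{q-1}\xi_n.$$
Hence $dy_n=(y_n/q)\,df_n$, and for $i<n$ we may regard $dx_i$ as one of the $df_i$ in the basis of $\Omega^1_X(\log(\lceil D\rceil))$. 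Substituting, each monomial with $s$ lower indices and $t$ upper indices equal to $n$ becomes
$$q^{t-s}\,A_{ijhk}(y)\,y_n^{\,s+(q-1)t}\; df_i\otimes df_j\otimes\xi_h\otimes\xi_k .$$
So the candidate coefficient is $B'''_{ijhk}=q^{t-s}A_{ijhk}\,y_n^{s+(q-1)t}$.

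Next we use $G$-invariance. Under $\zeta\in\mu_q$ the monomial $dy_n^{\otimes s}\otimes\partial_{y_n}^{\otimes t}$ acquires the factor $\zeta^{s-t}$. Invariance of $\s$ is therefore equivalent to
$$A_{ijhk}(y',\zeta y_n)\,\zeta^{s-t}=A_{ijhk}(y',y_n)\quad\text{for all }\zeta\in\mu_q .$$
Expanding $A_{ijhk}$ in powers of $y_n$, only the exponents congruent to $t-s$ modulo $q$ survive. Thus
$$A_{ijhk}=y_n^{[t-s]}\,g_{ijhk}(y',y_n^q)$$
with $g_{ijhk}$ holomorphic. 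Plugging this in gives
$$B'''_{ijhk}=q^{t-s}\,y_n^{\,qt+(s-t+[t-s])}\,g_{ijhk}=q^{t-s}\,x_n^{\,t+p(s,t)}\,g_{ijhk}(x).$$
Here $s-t+[t-s]\equiv 0 \pmod q$, so $p(s,t)$ is an integer. Moreover $t+p(s,t)=(s+(q-1)t+[t-s])/q\ge 0$. Consequently $B'''_{ijhk}$ is a holomorphic function of $x$, and $\s$ descends to a holomorphic section of $\Omega_X^1(\log(\lceil D\rceil))^{\otimes 2}\otimes T_X^{\otimes 2}$ for every $q$. Globally we then set $\s_X=i_*(\hat\s_X)$; since the statement is local, nothing more is needed there.

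To land in the smaller bundle $End\big[T_X(-\log(\lceil D\rceil))\otimes\Omega^1_X(\log(\lceil D\rceil))\big]$, each $\xi_n$ must be replaced by $x_n\xi_n$. This requires $B'''_{ijhk}$ to be divisible by $x_n^{t}$, i.e. $p(s,t)\ge 0$. Since $s,t\le 2$, we have $p(s,t)\ge (s-t)/q\ge -2/q$. As $p$ is an integer, this forces $p\ge 0$ when $q\ge 3$. For $q=2$ the case $s=0$, $t=2$ gives $p=-1$, so only the weaker statement holds; this is exactly the dichotomy claimed.

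For the converse we invert the substitution. Given $\s_X$ with coefficients $B'''_{ijhk}$, the pulled-back tensor on $Y\setminus D'$ has coefficients
$$A_{ijhk}=q^{s-t}\,B'''_{ijhk}(y',y_n^q)\,y_n^{-s-(q-1)t}.$$
This pullback is automatically $G$-invariant. It extends holomorphically across $D'$ iff $q\cdot\mathrm{ord}_{x_n}B'''_{ijhk}\ge s+(q-1)t$, i.e. iff
$$\mathrm{ord}_{x_n}B'''_{ijhk}\ \ge\ \big\lceil (s+(q-1)t)/q\big\rceil = t+\lceil (s-t)/q\rceil .$$
The remaining step is the elementary identity $\lceil (s-t)/q\rceil=(s-t+[t-s])/q=p(s,t)$, which yields the stated divisibility criterion. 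I expect the only delicate point to be the bookkeeping of the characters $\zeta^{s-t}$ together with this ceiling/residue identity. This includes getting the sign convention of $[t-s]$ right, so that the invariance exponent and the holomorphy threshold visibly coincide.
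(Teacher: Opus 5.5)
Your proposal is correct and follows the paper's proof. In both, you expand the coefficients in powers of $y_n$, use $\mu_q$-invariance to extract the factor $y_n^{[t-s]}$, and substitute $dy_n=(y_n/q)\,df_n$, $\partial_{y_n}=q\,y_n^{q-1}\xi_n$ to get coefficients equal to $x_n^{t+p(s,t)}$ times a holomorphic function of $x$. The only differences are cosmetic: the paper first writes $\sigma$ in the logarithmic frame $\delta_n=x_n\xi_n$ (getting $x_n^{p(s,t)}$), whereas you pass directly to $\xi_n$, and your identity $p(s,t)=\lceil (s-t)/q\rceil$ spells out the converse that the paper states only briefly.
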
 

\begin{proof}

Let us see how  local sections  of  $T_Y$  (resp. $\Omega_Y^1$)  descend to $X$.  


Let  $\eta_i := \partial / \partial  y_i, \ i=1, \cdots, n,$  be the standard  local frame  for $T_Y$, respectively  
$\xi_i =  \partial / \partial  x_i, \ i=1, \cdots, n,$ be the standard  local frame  for  $T_X$. 

Write $\s$,  a holomorphic tensor $ \s \in  End (T_Y \otimes  \Omega_Y^1 ))$,
as a holomorphic section of 
$$ (T_Y \otimes  \Omega_Y^1)^{\vee} \otimes (T_Y \otimes  \Omega_Y^1 )
\cong  (   \Omega_Y^1 \otimes T_Y) \otimes (T_Y \otimes  \Omega_Y^1) 
\cong     \Omega_Y^1 \otimes \Omega_Y^1 \otimes T_Y \otimes T_Y  .  $$

 Then we write $\s$ as 
 \medskip
  
$\s= \sum_{i,j=1}^n  \sum_{h,k=1}^n a_{ijhk} (y_1,\cdots,y_{n})dy_i \otimes dy_j \otimes \eta_h \otimes \eta_k=  $

 \medskip
 
$ = \sum_{i,j,h,k=1}^n (\sum_{m \geq 0} A_{ijhkm}(y_1,\cdots,y_{n-1})y_n^m)dy_i \otimes dy_j \otimes \eta_h \otimes \eta_k.$ 
 \medskip
 
The action of $G$  leaves the frame tensors $dy_i \otimes dy_j \otimes \eta_h \otimes \eta_k$ invariant, except if the
index $n$ occurs: and $dy_n \mapsto \zeta dy_n, \eta_n  \mapsto \zeta^{-1} \eta_n$.

Hence the action of $\zeta$ leaves $\s$ invariant if and only if we have only terms $A_{ijhkm}$
where $$ m + |\{ i,j =n\} |- |\{h,k=n\}|\equiv 0 \ mod (q).$$

Letting $ s : =  |\{ i,j =n\} |, t : =  |\{h,k=n\}| \Rightarrow  m \equiv t - s  \ mod (q),$ 
this means that $$A_{ijhk} (y) = B_{ijhk} (x) y_n ^{[t-s]},$$
where $[m] \in \{0,\dots, q-1\}$ denotes the residue class of $m$.

Now,  for $i \leq n-1$,  we may identify $\eta_i$ with $\xi_i$ and $dy_i $ with $dx_i$.
  
 Whereas, from  
the relation $x_n=y_n^q$ we get $$ q \ d \log (y_n) = d \log (x_n) \Leftrightarrow  q d y_n  = y_n \frac{d x_n }{x_n}  ,$$ 

and  $$  y_n  \eta_n = q x_n \xi_n  \Leftrightarrow \eta_n = q \frac{x_n}{y_n} \xi_n.$$

The left hand sides of these formulae mean that the standard frame  $df_1, \dots, df_n$ for $ \Omega_X^1(\log (\lceil D \rceil))$
 is a frame for 
$ \Omega_Y^1(\log (D'))$, where $D'\subset Y$ is   defined by $(y_n=0)$,  and the standard frame $\de_1, \dots, \de_n$ 
 for $T_X(-\log (\lceil D \rceil))$ is a frame for $T_Y(-\log (D'))$.

Whence if $\s$ is $G$-invariant, 

\medskip

$
 \s = \sum_{i,j,h,k=1}^n  A_{ijhk}(y)dy_i \otimes dy_j \otimes \eta_h \otimes \eta_k=$
 
 $
 =  \sum_{i,j,h,k=1}^n  B_{ijhk}(x) y_n ^{[t-s]} dy_i \otimes dy_j \otimes \eta_h \otimes \eta_k= $
 
 $
 =  \sum_{i,j,h,k=1}^n  B'_{ijhk}(x) y_n ^{s -t + [t-s]} df_i \otimes df_j \otimes \de_h \otimes \de_k.$

 \medskip
 
Observe now that $s-t \in \{ -2, -1,0,1,2\}$, hence $ [t-s] - (t-s)$, which is divisible by $q$, yields a power of $x_n$ with exponent 
$p(s,t)$, where, since $- t \leq  [t-s] - (t-s) \leq q + 1 $, $p(s,t)=0$ or $p(s,t)=1$ except if $t=2$, $s=0$, $q=2$,
in which case $p(s,t)=-1$.

We have therefore proven that $ \s \in  (End (T_Y \otimes  \Omega_Y^1 ))^G$
descends to a  holomorphic   tensor  
$$\s_X \in    \Omega_X^1(\log (\lceil D \rceil))\otimes \Omega_X^1(\log (\lceil D \rceil)) \otimes T_X \otimes   T_X,$$
 $$ \s_X =  \sum_{i,j,h,k=1}^n  B'_{ijhk}(x) x_n ^{p(s,t)} df_i \otimes df_j \otimes \de_h \otimes \de_k=$$
 $$ =  \sum_{i,j,h,k=1}^n  B'_{ijhk}(x) x_n^{t + p(s,t)} df_i \otimes df_j \otimes \xi_h \otimes \xi_k.$$

And, for $ q \geq 3$, 
$$\s_X \in   \Omega_X^1(\log (\lceil D \rceil))\otimes \Omega_X^1(\log (\lceil D \rceil))  \otimes  T_X(-\log (\lceil D \rceil))  \otimes   T_X(-\log (\lceil D \rceil)) \cong $$
$$ \cong  End \big[ T_X(-\log (\lceil D \rceil))  \otimes  \Omega_X^1(\log (\lceil D \rceil))\big] . $$

Moreover, for $ q \geq 3$, a holomorphic tensor $\s_X \in End \big[ T_X(-\log (\lceil D \rceil))  \otimes  \Omega_X^1(\log (\lceil D \rceil))\big]  $,
$$ \s_X =  \sum_{i,j,h,k=1}^n  B''_{ijhk}(x) df_i \otimes df_j \otimes \de_h \otimes \de_k$$
lifts to a holomorphic tensor $\s$ on $Y$ if and only if $B''_{ijhk}(x)$ is divisible by $x_n ^{p(s,t)}$
when ${p(s,t)}=1$, a condition which is equivalent to $ s=t +1$.

Hence, conversely,  including all the cases  $q\geq 2$,  a  holomorphic   tensor  
$\s_X \in  \Omega_X^1(\log (\lceil D \rceil))\otimes \Omega_X^1(\log (\lceil D \rceil)) \otimes T_X \otimes   T_X,$ written as 
$$ \s_X =  \sum_{i,j,h,k=1}^n  B'''_{ijhk}(x) df_i \otimes df_j \otimes \xi_h \otimes \xi_k$$
lifts to  a  holomorphic   tensor  $\s$ on $Y$ if and only if  $B'''_{ijhk}(x)$ is divisible by $x_n ^{t + p(s,t)}$,
where  $ 0 \leq t + p(s,t) \leq 3$.

\end{proof}

The above Lemma allows us to  introduce the notion of a tensor $s_X$  of orbifold type.

\begin{defin}\label{orbifold type} 
We shall say that,  on an orbifold $\sX = (X, D)$, a  holomorphic tensor 
$$\s_X \in    \Omega_X^1(\log (\lceil D \rceil))\otimes \Omega_X^1(\log (\lceil D \rceil)) \otimes T_X \otimes   T_X,$$
 is of {\bf orbifold type} if  its pull back  
  to   any reduced orbifold covering $\sY  \to \sX$ (i.e.,  branched exactly at $D$ with the appropriate multiplicities)  is holomorphic. 
 
 Equivalently, by  Lemma \ref{descent}, this means that if  $D_i$ is an orbifold irreducible divisor with order  $m_i$,
 then in the local expansion around $D_i = \{x_n=0\}$, the coefficients $B'''_{ijhk}(x)$ are divisible by $x_n^{t + p(s,t)} $
 (where we set $q=m_i$). 
\end{defin}

 \section{Proof of the main Theorems}

\subsection{Necessary conditions.  }

 For the necessary conditions,  observe that if 
$\sD$  is a   bounded symmetric domain and 
 $\sX$ is  a compact global orbifold  associated to a properly discontinuous subgroup $\Ga$ of biholomorphisms of $\sD$, 
then, by Selberg's Lemma,
the group $\Ga$ admits a finite index torsion free normal subgroup $\Lambda$   and  we obtain a compact manifold  
$ Y : = \sD / \Lambda$ such that   $  X = Y/G$, where $G$ is the finite quotient group $G : = \Ga / \Lambda$: hence (3) holds.
 
Considering  $\sB:  =\sum D_i$ the branch locus of $\pi \colon Y \to X$  and letting $m_i$ being the branching multiplicities,
we obtain an orbifold structure $\sX= (X,D)$ where $D= \sum (1- \frac{1}{m_i} ) D_i$   and an orbifold covering $ Y \to (X,D)$. 
 
Note that  $\sX $ is a Deligne-Mostow orbifold and the  singularities of $X$ are quotient singularities,
since, at any point $ y \in Y$ having a nontrivial stabilizer $G_y < G$, the group $G_y$ acts linearly 
by Cartan's Lemma \cite{cartan}. 
By 
 \cite{k-m} (Prop. 5.15, page 158)  quotient singularities $(X,x)$ are rational singularities, that is, they are normal and,
 if $\varphi : Z \ra X$ is a local resolution, 
then $\sR^i\varphi_* \hol_Z=0 $ for $i \geq 1$.

 Since $\sD$ is contractible, $\sX$ is an orbifold classifying space.

The canonical divisor $K_Y$ is ample  
by the classical results of   Siegel  (\cite[Thm. 3]{Sie73}) and
Kodaira (\cite[Thm. 6]{Kodaira}),
 and  $Y$ is projective. 

Moreover 
$Y$ is projective if and only if  $X$ is projective since, by averaging,  we can find on $Y$ a $G$-invariant very ample divisor. Since 
 $$ K_Y = \pi^{\ast} (K_{\sX} )=\pi^{\ast} (K_X +\sum_i  ( 1 - \frac{1}{m_i}  D_i)),$$
$K_{\sX}$ is ample, and  condition  (1) is verified.

The condition on the tensors and on the splitting of the tangent bundle,
follows from Definition \ref{orbifold type} and Lemma \ref{descent} and from  Theorem \ref{manifold}.

For Theorem \ref{dm2}, observe that $\Ga$ is a linear group, hence it is residually finite.

Finally, for Theorem \ref{theorem klt} if $\sX=\sD/\Gamma$, 
 we have a compact manifold  
$ Y : = \sD / \Lambda$  and a Galois morphism $Y \to (X, D)$.  
The  pair $(X,D)$ is klt    
by 
   \cite[Cor. 2.43]{Kol} since  $(Y, 0)$ is  klt, because $Y$ is smooth.

 \subsection{Sufficient conditions.} 
 \subsection{Proof of Theorem \ref{easier}}
 
  For the  the converse implication, the key argument is that 
  the orbifold covering $\sY$ associated to the torsion free  normal finite index  subgroup 
 $ \Lam < \Ga : = \pi_1^{orb}(X),$
 is a locally symmetric manifold. 
 
Indeed we have :
 
 \begin{lemma}\label{orbifold-covering} (\cite[Lemma 3.1]{cat25})
 The orbifold $\sY$ is just a normal complex space, that is,  there are no marked divisors with  multiplicity $m_i \geq 2$.
 \end{lemma}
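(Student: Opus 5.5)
The plan is to read off the multiplicities on $\sY$ directly from the correspondence between subgroups of $\pi_1^{orb}(\sX)$ and orbifold coverings, using only that $\Lam$ is torsion free.

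Fix a component $D_i$ of order $m_i$. Take a general (smooth) point $x \in D_i$, with $x \in X_{reg}$ and not lying on any other $D_k$. In a small ball $V$ around $x$ we have $V \setminus D_i \simeq \Delta^{n-1} \times \Delta^*$. Hence the local orbifold fundamental group at $x$ is cyclic, generated by the image of the simple geometric loop $\ga_i$, with the single relation $\ga_i^{m_i}=1$. So it is $\ZZ/m_i$.

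The first step is to show that the image of $\ga_i$ in $\Ga = \pi_1^{orb}(\sX)$ has order exactly $m_i$. Condition (3) gives a Galois covering $Y \to X$ with group $G = \Ga/\Lam$. Since $\Lam$ is torsion free, $\sY$ is reduced by hypothesis (3). I would nevertheless argue directly, since that is the content of the lemma.

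Over $V$, the covering $Y \to X$ splits into connected components. Each component is the orbifold covering of $V$ associated to the subgroup
$$\iota_x^{-1}(g\Lam g^{-1}) \subset \langle \ga_i \rangle \cong \ZZ/m_i,$$
with $g$ ranging over $\Ga$.

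The key step is that $\iota_x^{-1}(g\Lam g^{-1})$ is trivial. Any nontrivial element of it is a nontrivial power $\ga_i^{r}$ with $0<r<m_i$, which has finite order. Its image $\iota_x(\ga_i^r)$ lies in the torsion free group $g\Lam g^{-1}$, so the image must be trivial.

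This forces us to know that $\iota_x$ is injective, i.e. that $\ga_i$ has order exactly $m_i$ in $\Ga$. I expect this to be the main obstacle. I would deduce it from the existence of $Y$ itself, as follows.

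Suppose the order of $\ga_i$ in $\Ga$ were a proper divisor $m' \mid m_i$. Then $Y \to X$ would have ramification index dividing $m'$ along $D_i$. In the orbifold structure on $\sY$ given by Definition \ref{def: orbifold covering}, the components over $D_i$ would then carry multiplicity $n_{i,j} = m_i/r_{i,j} > 1$. This contradicts the hypothesis in (3) that $\sY$ is reduced. Since condition (3) includes reducedness, this case is excluded.

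Once $\iota_x$ is injective, the component of $Y$ over $V$ is the local universal orbifold cover $y_n \mapsto y_n^{m_i}$. Hence $r_{i,j} = m_i$ and $n_{i,j} = 1$, so there is no marked divisor on $\sY$.

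Finally, do the same at every $D_i$. This gives $D' = 0$, so $\sY$ is just the normal complex space $Y$, and the covering is unramified over $X_{reg} \setminus \lceil D \rceil$ by construction.
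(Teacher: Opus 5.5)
\textbf{Verdict: the argument is circular, so the claimed direct proof has a genuine gap.}

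Your local analysis is correct, and you isolate the real issue. Since $\Lam$ is torsion free, $\iota_x^{-1}(g\Lam g^{-1})=\ker\iota_x$. So the component of $Y$ over $V$ is $y_n\mapsto y_n^{r}$, where $r$ is the order of $\ga_i$ in $\Ga$, and the marked multiplicity on $\sY$ over $D_i$ is $m_i/r$. Hence, for torsion-free $\Lam$, $\sY$ is reduced if and only if every $\ga_i$ has order exactly $m_i$ in $\Ga$.

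You then obtain this order condition from the reducedness of $\sY$, which is the conclusion of the lemma. The argument proves ``$\sY$ is reduced'' from ``$\sY$ is reduced'', and the local discussion in between does no work. Your earlier sentence ``since $\Lam$ is torsion free, $\sY$ is reduced by hypothesis (3)'' already mixes the two things up. Reducedness is an extra clause of (3), not a consequence of torsion-freeness.

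The direct argument from torsion-freeness that you announce cannot be completed, because that implication is false. Take a smooth plane septic $C$ and $\sX=(\PP^2,\frac12 C)$. By Zariski, $\pi_1(\PP^2\setminus C)\cong\ZZ/7$, so $\Ga=\pi_1^{orb}(\sX)\cong\ZZ/\gcd(7,2)$ is trivial. Then $\Lam=\{1\}$ is torsion free of finite index, and $K_{\sX}\equiv\frac12 H$ is even ample. Yet the associated covering is $\sY=\sX$, which still carries $C$ with multiplicity $2$.

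So in the setting of Theorem \ref{easier} the statement, quoted from \cite{cat25}, is just the reducedness clause built into hypothesis (3). The honest proof is that one line.

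A genuine derivation needs an independent reason why $\ga_i$ has order $m_i$ in $\Ga$. One such reason is that the orbifold universal covering is reduced, which makes $\iota_x$ injective by the Proposition in Section \ref{section orbifold}. This holds for global quotients $\sD/\Ga$ and under ($\sA$-2). It is exactly what the residual-finiteness construction in the proof of Theorem \ref{dm2} uses to produce a reduced $\sY$.
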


The above lemma  yields in particular that 
$\sY =Y := \sD/ \Lam$. 
Moreover, since $K_{\sX}$ is ample, then $K_Y=\pi^* (K_{\sX})$  is ample too.

 Case (i):
 if we assume that $Y$ is smooth, then  by  (ii) $Y$ admits a holomorphic curvature type tensor which satisfies conditions B) and C), and we may directly
 invoke Theorem \ref{manifold}. 

 Case (i'): we use the argument of  \cite{cat25} to show that $Y$ must be smooth.
 
 Let $Y'$ be a resolution of $Y$. Since by assumption  $\sR^1 \pi_* (\ZZ_{Y'})=0 $ 
 (this is true for instance if $Y$ has rational singularities) and 
 $\sR^2 \pi_* (\ZZ_{Y'})=0, $ we have an isomorphism
 $$ H^j(Y', \QQ) \cong H^j(Y, \QQ), j = 1,2.$$

 Hence  the degree $1$ morphism 
 $\al : Y' \ra Y$ yields an isomorphism of first and second cohomology groups.
 
 We follow a similar argument to the one used in \cite{nankai}, proof of Proposition 4.8: it suffices to show 
 that $\al$ is  finite, because then $\al$,  being finite and birational,  is  an isomorphism $ Y' \cong Y$ by normality,
 hence we have shown that $Y$ is smooth and we proceed as for case (i).

  Now, if  $\al$ 
  is not finite,  there is a curve  $C$ which is contracted 
  by $\al$, hence its homology class $c \in H_2(Y', \QQ)$ maps to zero in  $ H_2(Y, \QQ)$.
  And, since  $H^{2}(Y, \QQ) \cong H^{2}(Y', \QQ)$, the class $c$ of $C$, by the projection formula,  is orthogonal  to the whole
  of  $H^{2}(Y', \QQ)$, which is the pull back  of $H^{2}(Y, \QQ)$.

 This is a contradiction because, $Y'$ being projective, 
 the class $c$ of $C$ cannot be trivial.

 Therefore we can conclude  applying  Theorem \ref{manifold}.   
\hfill$\Box$ 
 
 \subsection{Proof of Theorem \ref{dm}}

 Let $\sY=Y$ be  the normal variety 
  associated to the torsion free  normal   subgroup of finite index
 $ \Lam \  \triangleleft  \ \Ga : = \pi_1^{orb}(\sX),$ and let $Y \to \sX$ be 
 the Galois covering induced by $G= \Gamma/\Lambda$. 
  Since $\sX$ is a Deligne-Mostov  orbifold,
  then $Y$ has quotient singularities and 
   we can consider the orbifold  K\"ahler-Einstein metric, which  exists and   is complete   by \cite[Thm. 1.3]{Fau} 
   and \cite{Yau93}. 

 By  assumption (2), $X$ admits an algebraic   curvature-type tensor of orbifold type,
hence, in view of Definition 
 \ref{orbifold type} and by  Lemma \ref{descent}, on $Y$ we obtain a holomorphic algebraic   curvature-type tensor
 $\sigma_Y \in H^0(End \big[ T_{Y^*}  \otimes  \Omega_{Y^*}^1\big])$.    By our hypotheses there is  
 a splitting of the tangent space $ T : = T_{Y,y} \cong T_{X,x}$ (here $y\mapsto x$) 
$$T = T'_1 \oplus  ... \oplus T'_m$$
such that the first Mok characteristic cone  $\sC\sS$  of $\s_Y$  is $\neq T$ and moreover $\sC\sS$ splits into m
irreducible components $\sC\sS' (j )$  such that properties B) and C) hold.

  Let   now $\tilde{Y} \to Y$ be its universal covering
and let $Y' \subset \tilde{Y}$ be its smooth part. 
 
By  \cite[Proposition 5.4]{campana},   $\tilde{Y}$ admits a De Rham orbifold  decompositions  
$\tilde{Y} = \prod_{i=1}^k Y_i,$  which restricts to a  decomposition on the smooth part 
$$ Y' = \prod_{i=1}^k Y'_i,$$
in such a way  that   the holonomy group decomposes as $\prod H_i$, where the action of each $H_i$  on each factor $Y'_i$ is irreducible and zero on 
the other components.

 Moreover,  
all the factors have holonomy different from the Unitary group and,  
since $K_Y$ is ample, there are no flat factors. Hence, by the Theorem of Berger \cite{Berger} and Simons 
the holonomy of each factor is the holonomy of an irreducible bounded symmetric domain. 

On the other hand,  the curvature type tensor $\s_Y$ 
 is parallel with respect to orbifold metric on   $Y$, by  a generalization of the standard Bochner  arguments  (see the book \cite{BoYa} and the paper  \cite{Koba80} for 
  a reference in  the smooth case,  
 \cite[Thm. 8.2]{GGK19} for a detailed analysis in the case of numerically  trivial canonical class and   \cite[Thm. B]{GrPa} for the case of ample canonical class), and 
 hence it induces a decomposition of the tangent bundle  $T_{Y'}$.  
Arguing as in \cite{CaDS2} we see that  $k=m$ and that 
the above De Rham decomposition  
corresponds to the  decomposition induced by the first Mok characteristic cone  $\sC\sS$  of $\s_{Y}$.

To conclude the proof we want to show that  $Y_i=Y_i' \cong \sD_i$, where $\sD_i$ is a bounded symmetric domain not of ball type. 

Indeed,    $Y'_i $  admits a holomorphic map $f'_i$ to an irreducible  bounded symmetric domain $\sD_i$ and 
by the Hartogs property, $f_i$ extends to a holomorphic map 
$$f_i :  {Y}_i \ra \sD_i,$$
which is an isometry when restricted to $Y'_i$.

Because of  completeness, $f_i$ is surjective, and since ${Y}_i $ is normal, its singular locus has codimension $2$, 
hence $ f'_i : Y'_i \ra f'_i (Y'_i)$ must be an isomorphism as the target is simply connected, and we have a covering space.

Again by Hartogs, the inverse of $f_i$, defined on $f'_i (Y'_i)$, extends to yield an isomorphism;
hence $f_i$ is an isomorphism.

In particular, it follows that $\tilde{Y}$  is smooth, hence we only need now to invoke Theorem \ref{manifold}.  
\hfill$\Box$

 \subsection{Proof of Theorem \ref{dm2}.}

Since the orbifold fundamental group $\Ga $ is residually finite, for each $D_i$, $i \in I = \{ 1, \dots, h \}$, there exists a finite quotient $G_i$ of
$\Ga$ such that the image of $\ga_i$ inside $G_i$ has order exactly $m_i$.

Let $G$ be the  image of $\Ga$ in $G_1 \times \dots G_h$.

Then the orbifold covering associated to $\Lambda : = ker (\Ga \ra G)$ is a reduced orbifold, and condition (3) is verified.
It suffices now to apply Theorem \ref{dm}.
\hfill$\Box$

 \subsection{Proof of Theorem \ref{theorem klt}.}

 Let $\sX =(X, D)$ be  a klt orbifold satisfying the hypotheses of Theorem \ref{theorem klt}.

  By  assumption (3)  there is  a  reduced klt orbifold 
 $Y$ and a 
 Galois orbifold cover with group $G$, $\pi: Y \to ( X , D)$.  In particular there exist open dense subsets $X^0\subset X_{reg}\setminus  \lceil D \rceil $ and 
 $Y^0\subset Y_{reg}$ such that $\pi: Y^0 \to X^0$  is \'etale. 
 
   Moreover, it is immediately seen   that  $K_Y=\pi^* (K_{\sX})$  is ample  since 
$K_{\sX}$ is ample by  our assumptions.   

   We  mimic the proof  of  \cite[Thm. 1.2]{CaDS2} extending it from   the case where $Y$ is smooth, 
to the case of a normal space $Y$ with  klt singularities.

By  assumption (2) $X$  admits a Mok curvature-type tensor of orbifold type
hence, in view of Definition 
 \ref{orbifold type} and by  Lemma \ref{descent}, on $Y$ we obtain a Mok curvature-type tensor
  $\sigma_Y$,   yielding a splitting of the tangent space at the general points
  (here as in the above theorem $y\mapsto x$) 
$$T : = T_{Y,y} \cong T_{X,x}  = T'_1 \oplus  ... \oplus T'_m.$$

  By the klt assumption, and by Theorem 7.12 of  \cite{Eys},  the pair $(X,D)$ admits a  singular 
 K\"ahler-Einstein metric in the  cohomology class of  $K_X + D$, regular on 
 $X_{reg}\setminus \lceil D\rceil$, whose pull back on $Y$   is  a K\"ahler-Einstein  metric which is 
 regular on the smooth locus $Y_{reg} $: this  will   be denoted $\omega_{Y_{reg} }$ .
Such a metric is not complete but we can  overcome this potential difficulty, using  \cite{discala}. 

As shown in Theorem B of \cite{GrPa} and Theorem 8.1 of \cite{GGK19}, we can use the Bochner  arguments to show that  the curvature type tensor is parallel with respect to this metric.

Let now  $\iota : Y_{reg} \ra Y$ be the inclusion of the smooth locus of $Y$, and set  
$ \s_{Y_{reg} } : =   \s_Y|_{Y_{reg} } $.

 We have   a holomorphic section  of 
 $End \big[ T_{Y_{reg} }  \otimes  \Omega_{Y_{reg} }^1\big] $, and 
by the Bochner principle  we can deduce that 
 the tensor $\s_{Y_{reg} }  $
  is parallel for the   orbifold  K\"ahler-Einstein metric  $\omega_{Y_{reg} }$. 
  
Hence, if we restrict the tensor $\s_{Y_{reg} }  $ at any smooth point $ y \in Y_{reg}$ we observe that $\s_{Y_{reg} }  $ 
is   invariant for the restricted Holonomy group $H^0$.

 Moreover    the above  decomposition of the tangent sheaf and the holonomy invariance 
   induce (locally)  a corresponding decomposition $Y=Y_1 \times \cdots Y_m$   without 
  non-trivial flat factors since $Ric(\omega_{Y_{reg} }) = - \omega_{Y_{reg} }$  is negative definite, since by our assumption   $K_Y$ is ample.

By \cite{discala}, and since the metric is locally isometric to the one of a bounded symmetric domain without flat factors, the restricted Holonomy group $H^0$ is of finite index in its Normalizer inside the orthogonal group: a fortiori
$H^0$ is of finite index  in the full Holonomy group $H$.

Hence, passing to another (finite)  Galois 
orbifold covering $Y' \ra Y$ ($Y'$  is again a klt normal complex space), we may assume that we have a reduced
orbifold covering $Y'$ where  $\s_{Y'_{reg} }  $ is invariant for the full holonomy group, which equals the restricted
holonomy group \footnote{ Note that this step had been  done   earlier in \cite[Thm. 1.14 and Thm. 3.7]{GKP16}, where  the authors
obtain an orbifold covering, 
 and call it   the Galois universal closure  $\eta\colon Y' \to Y$. }.

Let $\sT_{Y'}$  be the tangent sheaf of $Y'$, which can be defined  as $  \iota_* \sT{_{Y'_{reg} }}$  (cf. \cite{GKPT}).

The  Mok tensor $\s$ induces a holonomy invariant decomposition of the holomorphic Tangent sheaf  $\sT{_{Y'_{reg} }}.$

While on $Y_{reg}$  we could write only 
locally    the identity
as a sum of idempotents $\phi_i$, since on $Y'$ 
the individual summands (which we shall call Mok subbundles) are preserved by the holonomy group,
we can write globally on $Y'_{reg} $ the identity
as a sum of such idempotents $\phi_i$.

Since on  $Y'$ we have global idempotents   $\phi_i$ and these, by the Hartogs property,     extend to  idempotents  
$\hat{\phi_i} \in End ( \Omega^{[1]}_{Y'})$ 
such that the identity equals their sum, 
defining $\sF_i : = \ker (1 - \hat{\phi_i})$, the Mok subbundles extend  to subsheaves on $Y'$ and we can write 
$$ \Omega^{[1]}_{Y'} = \oplus_i \sF_i .$$

We have a similar decomposition of $\sT_{Y'}$.

The Holonomy invariance  of such a decomposition
corresponds to the existence of  a 
 {\bf  uniformizing variation of Hodge structure} and hence to the existence of a Higgs  sheaf  $\sE$  on $ Y'_{reg}$
 which contains $T_{Y'_{reg}}$ as a direct summand 
 (see Propositions 6.3  and 6.6  of  \cite{GrPa2} for a detailed explanation, relating the holonomy groups $H_i$ with the
 Hodge groups of the decomposition of the Higgs bundle as a sum of polystable  sheaves).

  By \cite[Thm. 1.14]{GKP16}  $\sE$ extends to a locally free sheaf on the entire   $Y'$ and hence also the tangent sheaf  $\sT_{Y'}$, being a direct summand of a locally free sheaf is locally free  
  (see  also \S 5 and in particular Corollary 5.39 of  \cite{GKPT}  for these kind of arguments).

  Therefore,  by the Lipman-Zariski conjecture, valid in dimension $n \geq 3$,  the klt space $Y'$  is a smooth manifold
  if it has dimension $\geq 3$.  
  
  The case of dimension $n \leq 2$ is indeed well known (see \cite{Yau93}, and also for instance \cite{CaFr}).

To conclude we only need now to apply Theorem \ref{manifold} to $Y'$ and to consider the composition map $Y' \to Y \to (X,D)$, which is an orbifold covering. 
\hfill$\Box$

\begin{remark}
P. Graf and A. Patel  (Theorem 1.3 of  \cite{GrPa2}) prove  Theorem 4 
for the case of  klt varieties (that is, for reduced klt orbifolds) .
\end{remark} 

 \bigskip
 
{\bf Acknowledgements:} we thank  Antonio Di Scala for making us aware of his recent article \cite{discala}. 
Marco Franciosi   was    supported by the project PRIN 
 2022BTA242 ``Geometry of algebraic structures: Moduli, Invariants, Deformations''
  of Italian MUR,  and member of GNSAGA of INDAM.


\end{document}